\newtheorem{thm}{Theorem}[section]
\newtheorem{lem}[thm]{Lemma}
\newtheorem{prop}[thm]{Proposition}
\numberwithin{equation}{section}
\theoremstyle{definition}
\newtheorem{definition}[thm]{Definition}
\begin{document}

\title[Prabhakar fractional $q$-integral and $q$-differential operators]
{The Prabhakar fractional $q$-integral and $q$-differential operators,  and their properties}

\author[S. Shaimardan]{Serikbol Shaimardan}
\address{
  Serikbol Shaimardan:
  \endgraf
  L. N. Gumilyov Eurasian National University
  \endgraf
  5 Munaytpasov str., Astana, 010008
  \endgraf
  Kazakhstan
  \endgraf
  and
  \endgraf
Department of Mathematics: Analysis, Logic and Discrete Mathematics
  \endgraf
Ghent University, Krijgslaan 281, Building S8
B 9000 Ghent
\endgraf
Belgium
  \endgraf
  {\it E-mail address} {\rm shaimardan.serik@gmail.com}}

\author[E. Karimov]{Erkinjon Karimov}
\address{
  Erkinjon Karimov:
  \endgraf
  Fergana State University
  \endgraf
 19 Murabbiylar str., Fergana, 140100
  \endgraf
 Uzbekistan
 \endgraf
   and
  \endgraf
  V.I.Romanovskiy Institute of Mathematics
  \endgraf
 9 Universitet str., Tashkent, 100174
 \endgraf
 Uzbekistan
  \endgraf
 {\it E-mail address} {\rm erkinjon@gmail.com}
  }
 \author[M. Ruzhansky ]{Michael Ruzhansky }
\address{
Michael Ruzhansky :
  \endgraf
Department of Mathematics: Analysis, Logic and Discrete Mathematics
  \endgraf
Ghent University, Krijgslaan 281, Building S8
B 9000 Ghent
\endgraf
Belgium
\endgraf
and
\endgraf
School of Mathematical Sciences
Queen Mary University of London
\endgraf
London, United Kingdom
  \endgraf
  {\it E-mail address} {\rm Michael.Ruzhansky@UGent.be}
  }

\author[A. Mamanazarov]{Azizbek Mamanazarov}
\address{
 Azizbek Mamanazarov:
  \endgraf
Fergana State University
  \endgraf
 19 Murabbiylar str., Fergana, 150100
  \endgraf
 Uzbekistan
 \endgraf
 \endgraf
  {\it E-mail address} {\rm mamanazarovaz1992@gmail.com}
  }

\date{}

\begin{abstract}
In this paper, we have introduced the Prabhakar fractional $q$-integral and $q$-differential operators. We first study the semi-group property of the Prabhakar fractional $q$-integral  operator, which allowed us
 to introduce the corresponding $q$-differential  operator. Formulas for compositions of $q$-integral and $q$-differential
 operators are also presented. We show the boundedness of the Prabhakar fractional $q$-integral  operator in the class of $q$-integrable functions.

\end{abstract}

\subjclass[2010]{26A33, 39A13.}

\keywords{$q$-calculus, a generalized $q$-Prabhakar function, Prabhakar  fractional $q$-integral operator, Prabhakar  fractional $q$-differential operator.}

\maketitle

\section{Introduction}

Fractional calculus is the area of mathematical analysis that deals with the study and application
of integrals and derivatives of arbitrary order. In recent decades, fractional calculus has become
of increasing significance due to its applications in many fields of science and engineering. For instance, it has many applications in viscoelasticity, signals processing, electromagnetics, fluid mechanics, and optics. For more information on this research we refer the readers to \cite{Meral}, \cite{Mainardi}, \cite{Machado}, \cite{Kilbas}, \cite{Hilfer}, \cite{Lazo}
and the references therein.

An interesting and distinctive feature of the fractional calculus is that it is possible to present different definitions of fractional order integrals and derivatives; furthermore, many instances of those definitions are being applied and discussed to analyze specific processes \cite{Teodor}.
As an example, we can take the Riemann-Liouville and Caputo fractional order integral-differential operators which have been used widely to describe mathematical models of many natural phenomena (see the references \cite{Uchaikin}, \cite{Quresh}, \cite{Machado}). Recently, researchers have focused on the generalizations of fractional order operators. It can be explained, on the one hand, by the fact that in mathematical modelling of
some real-life processes we get such type of generalization of the fractional operators. On the other hand, it is an inner need of the theory of Fractional Calculus. We wish to focus on Prabhakar fractional $q$-differential and differential operators among these operators.

First, we would like to give some brief information about classical Prabhakar fractional calculus.
The theory of Prabhakar fractional calculus \cite{Garra} has been more intensively studied in recent years
and as a result, certain differential equations involving Prabhakar operators became an intensive target, which is interesting
both for their pure mathematical properties \cite{Restrepo}-\cite{Eshagi} and for their real-world applications
in topics such as viscoelasticity, anomalous dielectrics, and options pricing \cite{Colombaro}, \cite{Garappa}, \cite{Tomovski}.

The origin of the $q$-difference calculus can be traced back to the works \cite{Jakson} and \cite{Karmichel}.
Recently, the fractional $q$-difference calculus has been proposed by W. Alsalam \cite{Alsalam} and R.P.
Agarwal \cite{Agarval}. Nowadays new developments in the theory of fractional $q$-difference calculus have
been addressed extensively by several researchers (see
\cite{Raj}, \cite{Serikbologa} and the references therein ).

In the present work, we aim to introduce and study some properties of Prabhakar fractional $q$-integral and differential operators.

\section{Preliminaries}
First, we recall some elements of the $q$-calculus for the sequel. For more information we note the works
\cite{Kac}, \cite{Zaynab} and the references therein. From now on, we assume that $0<q<1$ and
$0\le a < b < \infty.$

Let $\alpha \in \mathbb{R}$. A $q$-real number $[\alpha]_{q} $ is defined by
\begin{eqnarray*}
  [\alpha]_{q}=
  \frac{1-{q}^{\alpha}}{1-q}.
\end{eqnarray*}

And also, the  $q$-shifted factorial is defined
by  \begin{eqnarray*}
{{\left( a;q \right)}_n}=\left\{ \begin{gathered}
1, \quad \quad \quad \quad n=0; \\
\left( 1-a \right)\left( 1-aq \right) ... \left( 1-a{q^{n-1}} \right), \quad n\in \mathbb{N}. \\
\end{gathered} \right.
\end{eqnarray*}

 The $q$-analogue of the factorial is
\begin{eqnarray}\label{2.1}
    [n]_q!=[1]_q[2]_q[3]_q...[n]_q=\frac{(q;q)_n}{(1-q)^n}, n\in \mathbb{N}, \quad [0]_q!=1.
\end{eqnarray}

For $q$-binomial coefficients we have the following formula
  \begin{eqnarray}\label{2.3}
\left[ \begin{gathered}
  n  \\
  k \\
\end{gathered} \right]_q=\frac{(1-q^n)(1-q^{n-1})...(1-q)^{n-k+1}}{(q;q)_k}=\frac{[n]_q!}{[n-k]_q![k]_q!}.
\end{eqnarray}

Also, the $q$-analogue of the power $(a-b)_{q}^k$ is defined by
\begin{eqnarray*}
    (a-b)_q^0=1, \quad (a-b)_q^k=\prod\limits_{i=0}^{k-1}{\left( a-bq^i \right)}, \quad k\in \mathbb{N}.
\end{eqnarray*}

There is the following relationship between them:
\begin{eqnarray*}
 (a-b)^0_q=1; \quad  (a-b)^k_q=a^k(b/a;q)_k, \quad a\ne 0, \quad k \in \mathbb{N},
\end{eqnarray*}
as well as
\begin{eqnarray*}
(a-b)^{\alpha}_q=
a^{\alpha} \frac{(b/a;q)_{\infty}}{(q^{\alpha}b/a;q)_{\infty}},
\quad   {\left( a;q \right)}_{\alpha }=
\frac{{\left( a;q \right)}_{\infty }}{\left( a{{q}^{\alpha }};q \right)}_{\infty }, \quad
(a;q)_{\infty}=\prod\limits_{i=0}^{\infty}\left( 1-a{{q}^{i}} \right).
\end{eqnarray*}

For $x>0$ the $q$-analogue of the gamma function is defined by
\begin{eqnarray}\label{2.4}
  {\Gamma }_q\left( x \right)=
  \frac{{\left( q;q \right)}_\infty }{{\left( {{q}^{x}};q \right)}_\infty }{\left( 1-q \right)}^{1-x}.
\end{eqnarray}

It has the following property
\begin{eqnarray}\label{2.5}
    \Gamma_q(x+1)=[x]_q\Gamma_q(x).
\end{eqnarray}

The (Jackson) $q$-derivative of a function $f(x)$ is defined by
\begin{eqnarray*}
  \left( D_qf \right)\left( x \right)=
  \frac{f\left( x \right)-f\left( qx \right)}{x\left( 1-q \right)}, \quad \left( x\ne 0 \right)\quad
\end{eqnarray*}
and $q$-derivatives $D_q^nf$ of
higher order are defined inductively as follows:
\begin{eqnarray*}
  D_{q}^{0}f=f,
  \quad  D_{q}^{n}f={D_q}\left( D_{q}^{n-1}f \right)\quad (n=1,2,3,...) .
\end{eqnarray*}

Moreover,
\begin{eqnarray}\label{2.6}
    D_q[(x-b)^{\alpha}_q]=[\alpha]_q(x-b)^{\alpha-1}_q,
\end{eqnarray}
\begin{eqnarray}\label{2.7}
    D_q[(a-x)^{\alpha}_q]=-[\alpha]_q(a-qx)^{\alpha-1}_q.
\end{eqnarray}

The $q$-integral (Jackson integral) is defined by
\begin{eqnarray*}
  \left( I_{q,0+}f \right)\left( x \right)=
  \int\limits_0^x f\left( t \right)d_qt=
  x\left( 1-q \right)\sum\limits_{k=0}^{\infty }{f\left( xq^k \right)q^k,}
\end{eqnarray*}
and
\begin{eqnarray*}
  \left( I_{q,a+}f \right)\left( x \right)=\int\limits_a^xf\left( t \right)d_qt=\int\limits_0^xf\left( t \right)d_qt-\int\limits_0^af\left( t \right)d_qt.
\end{eqnarray*}

For the $n$-th order integral operator $I_{q,a}^{n}$ we have
\begin{eqnarray*}
(I_{q,a+}^0f)(x)=f(x),\,\,(I_{q,a+}^nf)(x)=I_{q,a+}\left( I_{q,a+}^{n-1}f \right)(x)\quad \,\,\quad
\left( n=0,1,2,\cdots \right).
\end{eqnarray*}

And also between $q$-integral and $q$-derivative operators, we have the following relations:
\begin{eqnarray*}
  \left( D_qI_{q,a+}f \right)\left( x \right)=f\left( x \right),\,\,\,\left( I_{q,a+}D_qf \right)\left( x \right)=f\left( x \right)-f\left( a \right).
\end{eqnarray*}

For   $\alpha, \beta>0$  and $z\in \mathbb{R}$, a  $q$-analogue of the Mittag–Leffler function is defined as follows (\cite{Zaynab}):
\begin{eqnarray}\label{2.8}
  e_{\alpha ,\beta }\left( z;q \right)=\sum\limits_{n=0}^{\infty }{\frac{z^n}{\Gamma_q\left( \alpha n +\beta  \right)}, \quad  \quad \left( \left| z{{\left( 1-q \right)}^\alpha } \right|<1 \right)}.
\end{eqnarray}

\begin{definition}\label{def2.1}
(\cite{Nadeem}) Let $\alpha ,\beta ,\gamma,z \in \mathbb{R}$ such that $\alpha,\beta>0$. Then the $q$-Prabhakar function $e_{\alpha ,\beta }^{\gamma }\left(z;q \right)$
is defined by
\begin{eqnarray}\label{2.9}
  e_{\alpha ,\beta }^{\gamma }\left( z;q \right)=\sum\limits_{n=0}^{\infty }{\frac{(\gamma)_{n,q}z^n}{\Gamma _q\left( \alpha n+\beta  \right)}}, \quad   \quad |z(1-q)^{\alpha}|<1,
\end{eqnarray}
where
\begin{eqnarray}\label{2.10}
(\gamma)_{n,q}:=\frac{{\left( q^{\gamma };q \right)}_n}{\left( q;q \right)_n}.
\end{eqnarray}
    \end{definition}

\begin{lem} \label{Lemma2.2} ( \cite{Zaynab})
    Let  $\alpha$ and $\beta$ be two complex numbers. Then
    \begin{eqnarray}\label{2.11}        (q^{\alpha+\beta};q)_n=\sum\limits_{k=0}^{n}\left[ \begin{gathered}
  n  \\
  k \\
\end{gathered} \right]_q q^{k\beta}(q^{\alpha};q)_k(q^{\beta};q)_{n-k},\quad \,\,\quad
\left( n=0,1,2,\cdots \right).
    \end{eqnarray}
\end{lem}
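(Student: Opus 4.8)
The plan is to establish \eqref{2.11} by induction on $n$, after recasting it as an identity between polynomials in two independent variables. Writing $x=q^{\alpha}$ and $y=q^{\beta}$, so that $q^{\alpha+\beta}=xy$, the asserted formula becomes
\[
(xy;q)_n=\sum_{k=0}^{n}\left[ \begin{gathered} n \\ k \end{gathered} \right]_q y^{k}\,(x;q)_k\,(y;q)_{n-k},
\]
where both sides are polynomials in $x$ and $y$ with coefficients depending only on $q$ and $n$. It therefore suffices to prove this for generic $x,y$, after which the lemma follows by specializing $x=q^{\alpha}$, $y=q^{\beta}$, since $q^{\alpha}$ and $q^{\beta}$ are just particular complex numbers. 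I denote the right-hand sum by $S_n$.

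The base cases are immediate: for $n=0$ both sides equal $1$, and for $n=1$ a direct check gives $S_1=(1-y)+y(1-x)=1-xy=(xy;q)_1$. For the inductive step I assume $S_n=(xy;q)_n$ and pass to $S_{n+1}$ by means of the $q$-Pascal recurrence
\[
\left[ \begin{gathered} n+1 \\ k \end{gathered} \right]_q=\left[ \begin{gathered} n \\ k \end{gathered} \right]_q+q^{\,n+1-k}\left[ \begin{gathered} n \\ k-1 \end{gathered} \right]_q ,
\]
which splits $S_{n+1}$ into two sums. In the first I use $(y;q)_{n+1-k}=(y;q)_{n-k}\bigl(1-yq^{\,n-k}\bigr)$; in the second, after the index shift $k\mapsto k+1$, I use $(x;q)_{k+1}=(x;q)_k\bigl(1-xq^{\,k}\bigr)$. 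Expanding the four resulting pieces, two of them cancel exactly, and the remaining two reassemble into $S_{n+1}=\bigl(1-xy\,q^{\,n}\bigr)S_n$. Together with the factorial recurrence $(xy;q)_{n+1}=(xy;q)_n\bigl(1-xy\,q^{\,n}\bigr)$ and the induction hypothesis, this gives $S_{n+1}=(xy;q)_{n+1}$ and closes the induction.

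The only delicate point, and hence the main obstacle, is the bookkeeping in the inductive step: each of the two sums produced by the $q$-Pascal rule splits once more after the factorial recurrences are applied, and the telescoping succeeds only because one piece of each matches a piece of the other after the reindexing. Tracking the powers of $q$ and the shifted summation index so that this cancellation is exact is where care is required; the rest is routine. I note in passing that \eqref{2.11} is a $q$-analogue of the Vandermonde convolution, and could alternatively be derived by comparing coefficients of powers of $x$ through the finite $q$-binomial theorem $(a;q)_n=\sum_{k=0}^{n}\left[ \begin{gathered} n \\ k \end{gathered} \right]_q(-a)^k q^{\,k(k-1)/2}$; the inductive argument above is, however, the most self-contained.
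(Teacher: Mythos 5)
Your proposal cannot be matched against the paper's own argument for a simple reason: the paper does not prove this lemma at all. It is imported verbatim from Annaby--Mansour \cite{Zaynab}, and its only role in the paper is as a black box in the proof of Proposition \ref{prop2.3}, where it is invoked with $\alpha=\sigma$, $\beta=\gamma$. Judged on its own merits, your proof is correct and self-contained. The reduction to a two-variable identity via $x=q^{\alpha}$, $y=q^{\beta}$ is sound, since $\alpha$ and $\beta$ enter \eqref{2.11} only through the complex numbers $q^{\alpha}$ and $q^{\beta}$, and your inductive step does close exactly as you describe: using the $q$-Pascal rule you quote (valid in that form), shifting $k\mapsto k+1$ in the second sum, and applying the recurrences $(y;q)_{n+1-k}=(y;q)_{n-k}(1-yq^{n-k})$ and $(x;q)_{k+1}=(x;q)_k(1-xq^{k})$, one gets
\begin{align*}
S_{n+1}&=\sum_{k=0}^{n}\genfrac{[}{]}{0pt}{}{n}{k}_q\, y^{k}(x;q)_k(y;q)_{n-k}\bigl(1-yq^{n-k}\bigr)\\
&\quad+\sum_{k=0}^{n}q^{n-k}\genfrac{[}{]}{0pt}{}{n}{k}_q\, y^{k+1}(x;q)_k\bigl(1-xq^{k}\bigr)(y;q)_{n-k}\\
&=S_n-xyq^{n}S_n=\bigl(1-xyq^{n}\bigr)S_n,
\end{align*}
because the negative piece of the first sum and the leading piece of the second are the identical quantity $\sum_{k=0}^{n}q^{n-k}\genfrac{[}{]}{0pt}{}{n}{k}_q\, y^{k+1}(x;q)_k(y;q)_{n-k}$ and cancel, while the remaining piece of the second sum is $-xyq^{n}S_n$. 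Combined with $(xy;q)_{n+1}=(xy;q)_n(1-xyq^{n})$ and the induction hypothesis, this finishes the step. (One small point worth making explicit: the two sums produced by the Pascal rule nominally run to $k=n+1$ and from $k=0$ respectively, and it is the vanishing of $\genfrac{[}{]}{0pt}{}{n}{n+1}_q$ and $\genfrac{[}{]}{0pt}{}{n}{-1}_q$ that lets both be truncated to $0\le k\le n$; this is harmless but should be said.) What your argument buys over the paper is self-containedness, at the cost of length; the citation buys brevity. Your closing remark is also accurate: \eqref{2.11} is a form of the $q$-Chu--Vandermonde convolution and could alternatively be extracted from the Gauss $q$-binomial theorem by comparing coefficients.
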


\begin{prop}\label{prop2.3}
Let $\gamma,\sigma\in\mathbb{C}$. Then  the following equality is valid
\begin{eqnarray}\label{2.12}
  \sum\limits_{k=0}^{n} \left( \gamma  \right)_{n-k,q}q^{\gamma k}\left( \sigma  \right)_{k,q}= \left( \gamma +\sigma  \right)_{n,q},\quad \,\,\quad
\left( n=0,1,2,\cdots \right).
\end{eqnarray}
\end{prop}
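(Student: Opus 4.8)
The plan is to reduce the claimed identity to Lemma~\ref{Lemma2.2} by translating the symbols $(\cdot)_{n,q}$ back into ordinary $q$-shifted factorials. Using the definition \eqref{2.10}, the left-hand side of \eqref{2.12} becomes
\[
\sum_{k=0}^{n} \frac{(q^{\gamma};q)_{n-k}}{(q;q)_{n-k}}\, q^{\gamma k}\, \frac{(q^{\sigma};q)_{k}}{(q;q)_{k}},
\]
while the right-hand side is $(q^{\gamma+\sigma};q)_{n}/(q;q)_{n}$. Multiplying both sides by the common factor $(q;q)_{n}$, it therefore suffices to prove
\[
\sum_{k=0}^{n} \frac{(q;q)_{n}}{(q;q)_{n-k}(q;q)_{k}}\, (q^{\gamma};q)_{n-k}\, q^{\gamma k}\, (q^{\sigma};q)_{k} = (q^{\gamma+\sigma};q)_{n}.
\]

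The next step is to recognize the prefactor $(q;q)_{n}/[(q;q)_{n-k}(q;q)_{k}]$ as the $q$-binomial coefficient from \eqref{2.3}. Indeed, writing $[m]_q!=(q;q)_m/(1-q)^m$ as in \eqref{2.1}, the powers of $(1-q)$ cancel and this quotient equals $[n]_q!/\big([n-k]_q!\,[k]_q!\big)$, which is exactly the coefficient appearing in \eqref{2.3}. Hence the identity to be established reads
\[
(q^{\gamma+\sigma};q)_{n}=\sum_{k=0}^{n} \left[ \begin{gathered} n  \\ k \\ \end{gathered} \right]_q q^{\gamma k}\, (q^{\sigma};q)_{k}\, (q^{\gamma};q)_{n-k}.
\]

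Finally I would invoke Lemma~\ref{Lemma2.2} with the substitution $\alpha=\sigma$ and $\beta=\gamma$. With this choice \eqref{2.11} becomes
\[
(q^{\sigma+\gamma};q)_{n}=\sum_{k=0}^{n}\left[ \begin{gathered} n  \\ k \\ \end{gathered} \right]_q q^{k\gamma}\,(q^{\sigma};q)_{k}\,(q^{\gamma};q)_{n-k},
\]
which coincides with the right-hand side displayed above, since $\gamma+\sigma=\sigma+\gamma$. This completes the proof. I expect no genuine obstacle here; the only points requiring care are matching the summation indices correctly—so that the factor $q^{\gamma k}$ is paired with $(q^{\sigma};q)_{k}$ and with $(q^{\gamma};q)_{n-k}$ in the arrangement dictated by \eqref{2.11}—and checking that the $(1-q)$ factors in the $q$-factorials cancel exactly, so that the weight is precisely the $q$-binomial coefficient of Lemma~\ref{Lemma2.2}.
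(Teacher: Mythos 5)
Your proposal is correct and follows essentially the same route as the paper's own proof: unfold the definition \eqref{2.10}, clear the factor $(q;q)_n$, identify the resulting weight as the $q$-binomial coefficient via \eqref{2.1} and \eqref{2.3}, and apply Lemma~\ref{Lemma2.2} with $\alpha=\sigma$, $\beta=\gamma$. No gaps; the index bookkeeping you flag is handled exactly as in the paper.
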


\begin{proof}
    Taking (\ref{2.10}) into account, we rewrite (\ref{2.12}) in the form
    \begin{eqnarray*}
        \sum\limits_{k=0}^{n} q^{k\gamma}\frac{{\left( q^{\gamma };q \right)}_{n-k}}{\left( q;q \right)_{n-k}}\frac{{\left( q^{\sigma};q \right)}_k}{\left( q;q \right)_k}=\frac{{\left( q^{\gamma+\sigma};q \right)}_n}{\left( q;q \right)_n}.
    \end{eqnarray*}

To prove Proposition \ref{prop2.3} it is sufficient to show the validity of the last equality. For this aim, we multiply both sides of the last equality by $(q;q)_n$ and taking into account (\ref{2.1}) and (\ref{2.3}), we obtain
\begin{eqnarray*}
    {\left( q^{\gamma+\sigma};q \right)}_n&=&        \sum\limits_{k=0}^{n} \frac{{\left( q;q \right)}_{n}}{\left( q;q \right)_{n-k}(q;q)_{k}}q^{k\gamma}{{\left( q^{\sigma};q \right)}_k}{\left( q^{\gamma};q \right)_{n-k}}\\&=&
\sum\limits_{k=0}^{n} \frac{\frac{{\left( q;q \right)}_{n}}{(1-q)^n}}{\frac{\left( q;q \right)_{n-k}}{(1-q)^{n-k}}\frac{(q;q)_{k}}{(1-q)^{k}}}q^{k\gamma}{{\left( q^{\sigma};q \right)}_k}{\left( q^{\gamma};q \right)_{n-k}}\\&=&
\sum\limits_{k=0}^{n}\frac{[n]_q!}{[n-k]_q![k]_q!}q^{k\gamma}{{\left( q^{\sigma};q \right)}_k}{\left( q^{\gamma};q \right)_{n-k}}
\\&=&
\sum\limits_{k=0}^{n}\left[ \begin{gathered}
  n  \\
  k \\
\end{gathered} \right]_q q^{k\gamma}(q^{\sigma};q)_k(q^{\gamma};q)_{n-k}.
\end{eqnarray*}

Using the result of Lemma \ref{Lemma2.2} when $\alpha=\sigma$ and $\beta=\gamma$, we get the proof of the Proposition  \ref{prop2.3}.
\end{proof}

Now, we introduce a generalized $q$-Prabhakar function.

\begin{definition}\label{def2.2}
Let $\alpha ,\beta ,\gamma, \omega, \delta, z, s  \in \mathbb{R}$ be such that $\alpha,\beta>0$ and $s<z$. Then the generalized $q$-Prabhakar function $e_{\alpha ,\beta }^{\gamma }$
is defined by
\begin{eqnarray}\label{2.13}
  e_{\alpha ,\beta }^{\gamma }\left[ \omega(z-s)_{q}^{\delta};q \right]:=\sum\limits_{n=0}^{\infty }{\frac{(\gamma)_{n,q}{\omega}^{n}(z-s)_{q}^{\delta n}}{\Gamma _q\left( \alpha n+\beta  \right)}},
\end{eqnarray}
where $|\omega(z-s)_{q}^{\delta}|<(1-q)^{-\alpha}$.
\end{definition}

We note that (\ref{2.13}) can be considered a generalization of some known functions. For instance, if  $\delta=\omega=1, s=0$ then  from (\ref{2.13}) we get  Definition \ref{def2.1} of $q$-Prabhakar function. And also when  $ \gamma=0$ and $\delta=\omega=1$ then  from (\ref{2.13}) we get formula (\ref{2.8}) for the $q$-Mittag-Leffler function.

Now, we give basic concepts of the $q$-fractional calculus.

\begin{definition}\label{def2.5}(\cite{Raj})
    The Riemann-Liouville $q$-fractional integral $I_{q,a+}^{\alpha }$ of order $\alpha > 0$
is defined by
\begin{eqnarray}\label{2.14}
  \left( I_{q,a+}^{\alpha }f \right)\left( x \right)=\frac{1}{\Gamma_q\left( \alpha  \right)}\int\limits_a^x{{\left( x-qt \right)}^{\alpha -1}_q}f\left( t \right)d_qt.
\end{eqnarray}
\end{definition}

\begin{definition}(\cite{Raj})\label{def2.6}
The Riemann-Liouville $q$-fractional differential operator $D_{q,a+}^{\alpha }f$ of order $\alpha > 0$
is defined by
\begin{eqnarray}\label{2.15}
  \left( D_{q,a+}^{\alpha }f \right)\left( x \right)=\left( D_{q,a+}^{\lceil \alpha \rceil}I_{q,a+}^{\lceil \alpha \rceil-\alpha }f \right)\left( x \right),
\end{eqnarray}
where $\lceil \alpha \rceil$ denotes the smallest integer greater or equal to $\alpha$.
\end{definition}

Notice that for $\lambda \in (-1,+\infty)$, we have
\begin{eqnarray}\label{2.16}
    I_{q,a+}^{\alpha }\left( \left( x-a \right)_q^{\lambda } \right)=\frac{\Gamma_q\left( \lambda +1 \right)}{\Gamma _q\left( \alpha +\lambda +1 \right)}\left( x-a \right)_q^{ \alpha +\lambda  }.
\end{eqnarray}

For $1\le p<\infty $ the space  $L_q^p\left[ a,b \right]$  is defined by (\cite{Serikbologa})
\begin{eqnarray*}  L_q^p\left[ a,b \right]=\left\{ f:\left[ a,b \right]\to \mathbb{C}:{{\left( \int\limits_a^b\left| f\left( x \right) \right|^pd_qx \right)}^{1/p}}<\infty  \right\}.
\end{eqnarray*}

\begin{definition}\label{def2.7}(\cite{Zaynab})
 A function $f:[a,b] \to \mathbb{R}$ is called $q$-absolutely continuous if $\exists\varphi \in L_q^1[a,b]$ such that
\begin{eqnarray*} f(x)=f(a)+\int\limits_a^x{\varphi(t)d_qt}
\end{eqnarray*}
for all $x\in [a,b]$.
\end{definition}

The set of all $q$-absolutely continuous functions defined on $[a,b]$ is denoted by  $AC_q[a, b]$. Moreover, $AC_{q}^{n}[a, b]$ ($n\in\mathbb{N}$) is the space of real-valued functions $f(x)$ which have $q$-derivatives up to order $n-1$ on $[a,b]$ such that $D_{q}^{n-1}f \in  AC_q[a, b]$, i.e.
\begin{eqnarray*}
    AC_{q}^{n}[a, b]=\left\{ f: [a,b]\to \mathbb{R}; (D_q^{n-1}f)(x)\in AC_q[a,b]
 \right\}.
\end{eqnarray*}

\begin{lem}\label{lemma2.8} ( \cite{Serikbologa})
a) \quad Let \quad $\alpha >0$, $\beta >0$ and $1\le p<\infty$. Then the $q$-fractional integral has the following semi-group property
\begin{eqnarray*}
  \left( I_{q,a+}^{\alpha }I_{q,a+}^{\beta }f \right)\left( x \right)=\left( I_{q,a+}^{\alpha +\beta }f \right)\left( x \right)
\end{eqnarray*}
for all $x\in \left[ a,b \right]$ and $f\in L_q^p\left[ a,b \right]$.

b) Let $\alpha >\beta >0,\,\,1\le p<\infty $ and $f\in L_q^p\left[ a,b \right]$. Then the following equalities
\begin{eqnarray*}
  \left( D_{q,a+}^{\alpha }I_{q,a+}^{\alpha } \right)\left( x \right)=f\left( x \right), \quad \,\,\,\left( D_{q,a+}^{\beta }I_{q,a+}^{\alpha }f \right)\left( x \right)=\left( I_{q,a+}^{\alpha -\beta }f \right)\left( x \right)
\end{eqnarray*}
hold for all $x\in \left[ a,b \right]$.
    \end{lem}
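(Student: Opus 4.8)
The plan is to reduce everything to the semi-group property of part (a) together with the integer-order relation $\left(D_qI_{q,a+}f\right)(x)=f(x)$ already recorded in the preliminaries. I would prove (a) directly, and then deduce (b) from it essentially by bookkeeping of exponents.

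For part (a), I would substitute the definition (\ref{2.14}) twice to write $\left(I_{q,a+}^{\alpha}I_{q,a+}^{\beta}f\right)(x)$ as the iterated Jackson integral
\[
\frac{1}{\Gamma_q(\alpha)\Gamma_q(\beta)}\int_a^x (x-qt)_q^{\alpha-1}\left(\int_a^t (t-qs)_q^{\beta-1}f(s)\,d_qs\right)d_qt.
\]
The first step is to interchange the order of $q$-integration: since $f\in L_q^p[a,b]$, the underlying double Jackson sum converges absolutely, so a $q$-Fubini argument is legitimate and rearranges the expression into $\int_a^x f(s)\,K(x,s)\,d_qs$, where $K(x,s)$ is the inner $q$-integral of the product $(x-qt)_q^{\alpha-1}(t-qs)_q^{\beta-1}$ over the appropriate range of $t$. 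The crux is then to evaluate this $q$-beta-type integral and establish
\[
\int (x-qt)_q^{\alpha-1}(t-qs)_q^{\beta-1}\,d_qt=\frac{\Gamma_q(\alpha)\Gamma_q(\beta)}{\Gamma_q(\alpha+\beta)}(x-qs)_q^{\alpha+\beta-1},
\]
with the limits taken as dictated by the Fubini step. I expect this $q$-beta evaluation, with the shifted argument $t-qs$ rather than $t-a$, to be the main obstacle; it can be handled by reducing to the power rule, e.g. by recognizing the inner integral as $I_{q,a+}^{\alpha}$ applied to a monomial and invoking an analogue of (\ref{2.16}), or by a direct term-by-term computation of the Jackson sums combined with the $q$-binomial identity of Lemma \ref{Lemma2.2}. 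Substituting the value of $K$ back reproduces exactly the kernel of $I_{q,a+}^{\alpha+\beta}$, giving $\left(I_{q,a+}^{\alpha+\beta}f\right)(x)$.

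An alternative, perhaps cleaner, route to (a) is to verify the identity first on the power functions $(x-a)_q^{\lambda}$ using (\ref{2.16}): two successive applications yield $\frac{\Gamma_q(\lambda+1)}{\Gamma_q(\alpha+\beta+\lambda+1)}(x-a)_q^{\alpha+\beta+\lambda}$, which coincides with $I_{q,a+}^{\alpha+\beta}(x-a)_q^{\lambda}$, and then to extend to all $f\in L_q^p$ by linearity and a density/continuity argument. I would nonetheless keep the direct Fubini computation as the primary argument, since it does not presuppose boundedness of $I_{q,a+}^{\alpha}$ on $L_q^p[a,b]$.

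For part (b), both identities follow from (a) and Definition \ref{def2.6}. Writing $D_{q,a+}^{\alpha}=D_q^{\lceil \alpha\rceil}I_{q,a+}^{\lceil \alpha\rceil-\alpha}$, the first relation becomes $D_q^{\lceil \alpha\rceil}I_{q,a+}^{\lceil \alpha\rceil-\alpha}I_{q,a+}^{\alpha}f=D_q^{\lceil \alpha\rceil}I_{q,a+}^{\lceil \alpha\rceil}f$ by (a), and this equals $f$ after iterating $\left(D_qI_{q,a+}f\right)(x)=f(x)$ a total of $\lceil \alpha\rceil$ times, i.e. using $D_q^nI_{q,a+}^n=\mathrm{id}$, which I would justify by an easy induction on $n$. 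For the second relation, the same substitution together with (a) gives $D_q^{\lceil \beta\rceil}I_{q,a+}^{\lceil \beta\rceil-\beta+\alpha}f$; since $\alpha-\beta>0$, I would split $I_{q,a+}^{\lceil \beta\rceil-\beta+\alpha}=I_{q,a+}^{\lceil \beta\rceil}I_{q,a+}^{\alpha-\beta}$ by one further use of the semi-group property, so that $D_q^{\lceil \beta\rceil}I_{q,a+}^{\lceil \beta\rceil}$ cancels and leaves $\left(I_{q,a+}^{\alpha-\beta}f\right)(x)$. Thus part (b) is routine once (a) and the integer-order cancellation are in hand; the only point requiring care is checking that all intermediate exponents stay positive, so that every invocation of the semi-group property is valid.
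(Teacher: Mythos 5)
The paper never proves this lemma: it is imported verbatim from \cite{Serikbologa}, so there is no internal proof to compare against, and your reconstruction should be judged on its own merits. It is correct, and it is essentially the standard argument from that literature. Two remarks. First, the step you flag as ``the main obstacle'' --- the $q$-beta evaluation with shifted base point --- is already fully covered by the paper's toolkit: after the $q$-Fubini interchange (whose inner limits run from $qs$ to $x$, exactly as in the paper's own change of order in the proof of Lemma \ref{lemma3.3}), the inner integral is $\Gamma_q(\alpha)$ times $I_{q,qs+}^{\alpha}$ applied to the power $(\cdot-qs)_q^{\beta-1}$, so formula (\ref{2.16}) with $a$ replaced by $qs$ and $\lambda=\beta-1>-1$ gives at once
\[
\int_{qs}^{x}(x-qt)_q^{\alpha-1}(t-qs)_q^{\beta-1}\,d_qt
=\frac{\Gamma_q(\alpha)\Gamma_q(\beta)}{\Gamma_q(\alpha+\beta)}\,(x-qs)_q^{\alpha+\beta-1},
\]
which is precisely the device used in the paper's Proposition \ref{prop3.1}; you do not need Lemma \ref{Lemma2.2} directly for this route (it is hidden inside (\ref{2.16})). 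Second, in part (b) your bookkeeping is right, but mind the integer-order edge case: when $\alpha\in\mathbb{N}$ (resp.\ $\beta\in\mathbb{N}$) the exponent $\lceil\alpha\rceil-\alpha$ (resp.\ $\lceil\beta\rceil-\beta$) equals $0$, not a positive number, so that particular invocation of part (a) must be replaced by the convention $I_{q,a+}^{0}=\mathrm{id}$; with that caveat, $D_q^{n}I_{q,a+}^{n}=\mathrm{id}$ follows by induction from $(D_qI_{q,a+}f)(x)=f(x)$, and both identities of (b) drop out as you describe. Your alternative route to (a) via power functions and density is the only genuinely fragile point (a density-and-continuity argument in $L_q^p[a,b]$ would lean on the boundedness of $I_{q,a+}^{\alpha}$, i.e.\ Lemma \ref{lemma}, which you rightly wish to avoid presupposing), but since you keep the Fubini computation as the primary argument, the proposal stands as a complete and correct proof sketch.
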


\begin{lem}\label{lemma} (\cite{Serikbologa})
Let \quad $\alpha>0$  and  $1\leq p<\infty$. Then the $q$-fractional integral operator  $I_{q,a+}^{\alpha}$ is bounded in $L_q^{p}[a,b]$:
\begin{eqnarray}\label{2.17}
    \left\| {I_{q,a+}^{\alpha}}f \right\|_{L_q^p[a,b]}\leq K \left\| {f} \right\|_{L_q^p[a,b]},
    \end{eqnarray}
where $K=\frac{(b-qa)_q^{\alpha}}{\Gamma_q(\alpha+1)}$.
    \end{lem}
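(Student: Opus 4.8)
The plan is to treat $I_{q,a+}^{\alpha}$ as an integral operator with the nonnegative kernel $\frac{1}{\Gamma_q(\alpha)}(x-qt)_q^{\alpha-1}$ on the triangle $a\le t\le x\le b$, and to run the classical $L^p$-boundedness argument for Riemann--Liouville integrals in the $q$-setting. First I would replace $f$ by $|f|$ to reduce to a pointwise bound for $\bigl|(I_{q,a+}^{\alpha}f)(x)\bigr|$, and then apply H\"older's inequality for the Jackson integral after factoring the kernel as $(x-qt)_q^{\alpha-1}=\bigl[(x-qt)_q^{\alpha-1}\bigr]^{1/p'}\cdot\bigl[(x-qt)_q^{\alpha-1}\bigr]^{1/p}$, where $\frac1p+\frac1{p'}=1$. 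This factorization uses only ordinary real exponent laws on the numerical value of the kernel at each fixed $(x,t)$, so the (generally false) $q$-power addition law is never invoked. Assuming $p>1$, this yields
\[
\bigl|(I_{q,a+}^{\alpha}f)(x)\bigr|\le\frac{1}{\Gamma_q(\alpha)}\Bigl(\int_a^x(x-qt)_q^{\alpha-1}\,d_qt\Bigr)^{1/p'}\Bigl(\int_a^x(x-qt)_q^{\alpha-1}|f(t)|^p\,d_qt\Bigr)^{1/p}.
\]

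The next step is to evaluate the two elementary $q$-integrals that will appear. Using (\ref{2.7}) to antidifferentiate $t\mapsto(x-qt)_q^{\alpha-1}$ gives $\int_a^x(x-qt)_q^{\alpha-1}\,d_qt=\frac{(x-a)_q^{\alpha}}{[\alpha]_q}$ (the upper endpoint contributes $(x-x)_q^{\alpha}=0$), and similarly (\ref{2.6}) gives $\int_t^b(x-qt)_q^{\alpha-1}\,d_qx=\frac{1}{[\alpha]_q}\bigl[(b-qt)_q^{\alpha}-(t-qt)_q^{\alpha}\bigr]\le\frac{(b-qt)_q^{\alpha}}{[\alpha]_q}$, the subtracted term being nonnegative. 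Raising the pointwise bound to the $p$-th power, inserting the first evaluation into the $1/p'$ factor, and using the monotonicity estimate $(x-a)_q^{\alpha}\le(b-qa)_q^{\alpha}$ (valid since $x\le b$ and $a\ge qa$), I obtain
\[
\bigl|(I_{q,a+}^{\alpha}f)(x)\bigr|^p\le\frac{1}{\Gamma_q(\alpha)^p}\Bigl(\frac{(b-qa)_q^{\alpha}}{[\alpha]_q}\Bigr)^{p-1}\int_a^x(x-qt)_q^{\alpha-1}|f(t)|^p\,d_qt.
\]

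To finish, I would integrate this inequality in $x$ over $[a,b]$ and interchange the order of the two Jackson integrals over the triangle $\{a\le t\le x\le b\}$, so that the inner integral becomes $\int_t^b(x-qt)_q^{\alpha-1}\,d_qx$, which the previous step bounds by $\frac{(b-qt)_q^{\alpha}}{[\alpha]_q}\le\frac{(b-qa)_q^{\alpha}}{[\alpha]_q}$. Collecting factors gives $\|I_{q,a+}^{\alpha}f\|_{L_q^p[a,b]}^p\le\Gamma_q(\alpha)^{-p}\bigl((b-qa)_q^{\alpha}/[\alpha]_q\bigr)^{p}\,\|f\|_{L_q^p[a,b]}^p$; taking $p$-th roots and using $\Gamma_q(\alpha+1)=[\alpha]_q\Gamma_q(\alpha)$ from (\ref{2.5}) produces exactly $K=\frac{(b-qa)_q^{\alpha}}{\Gamma_q(\alpha+1)}$. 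The exponents combine as $(p-1)+1=p$, which is precisely what makes the constant come out clean.

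The main obstacle I anticipate is the rigorous justification of the interchange of the order of $q$-integration over the triangular region: a Jackson integral is an infinite sum, so this is really a Tonelli-type statement about reordering a nonnegative double series, and the nested structure $\int_a^x=\int_0^x-\int_0^a$ must be tracked carefully. Since the integrand is nonnegative after passing to $|f|$, the interchange is legitimate, but it is the one step that deserves a careful $q$-Fubini argument rather than a one-line appeal. Secondary technical points to verify are the positivity of the kernel $(x-qt)_q^{\alpha-1}$ for $a\le t\le x$ (including the range $\alpha<1$) and the two monotonicity estimates for the $q$-power used in the majorizations; both are standard but should be stated explicitly. The case $p=1$ is degenerate, the $1/p'$ factor simply disappearing, and can either be folded into the same computation or checked directly, where it reproduces the constant $K$ immediately.
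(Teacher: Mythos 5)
The paper states this lemma without proof (it is imported from \cite{Serikbologa}), but your argument is essentially the very method the paper itself uses to prove the generalized version, Lemma \ref{lem3.4}: the H\"older--Rogers inequality with the kernel split between the $1/p'$ and $1/p$ factors, a uniform bound on the kernel mass $\int_a^x (x-qt)_q^{\alpha-1}\,d_qt = (x-a)_q^{\alpha}/[\alpha]_q \le (b-qa)_q^{\alpha}/[\alpha]_q$, an interchange of the order of $q$-integration, and exponents combining as $p/p'+1=p$, so your proposal is correct and takes the same route. The only detail to adjust is that the $q$-Fubini interchange over the triangle produces the inner integral $\int_{qt}^{b}(x-qt)_q^{\alpha-1}\,d_qx$ with lower limit $qt$ rather than $t$ (exactly as in the paper's computation for Lemma \ref{lem3.4}), and this evaluates exactly to $(b-qt)_q^{\alpha}/[\alpha]_q$, which coincides with your claimed upper bound, so the final constant $K=(b-qa)_q^{\alpha}/\Gamma_q(\alpha+1)$ is unaffected.
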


\section{ The definitions and the main properties of the Prabhakar fractional $q$- integral and $q$-differential operators}

In this section, using Definition \ref{def2.2} of  the generalized $q$-Prabhakar function  we introduce Prabhakar fractional $q$-integral and $q$-differential operators.

\begin{definition}\label{def3.1}
Let $f\in L_{q}^{1}\left[ a,b \right]$ and  $\alpha ,\beta, \gamma, \omega \in \mathbb{R}$ be such that $ \alpha,\beta>0$. Then the Prabhakar fractional $q$-integral operator is defined by
\begin{eqnarray}\label{3.1}
  \left(_{}^{P}I_{q,a+}^{\alpha ,\beta ,\gamma ,\omega }f\right) \left( x \right) :=
  \int\limits_{a}^{x}{\left( x-qt \right)_{q}^{\beta -1}e_{\alpha ,\beta }^{\gamma }\left[\omega\left( x-q^{\beta}t \right)_{q}^{\alpha };q \right] f(t)d_qt}.
\end{eqnarray}
\end{definition}

From here and for the rest of the paper we denote $\omega^{\prime}=q^{\gamma}\omega$.

\begin{prop}\label{prop3.1}
 Let $\alpha ,\beta, \gamma,$
 $\mu,$ $\sigma$, $\omega \in \mathbb{R}$ be such that $ \alpha,\beta, \mu >0$ and
$x,s \in \mathbb{R}^{+}$, $x>s$.
Then
\begin{eqnarray}\label{eq3.2}
 _{}^{P}I_{q, qs+}^{\alpha, \beta, \gamma ,\omega}
  \left\{g_{\sigma, \omega'}^{\alpha,\mu}(x,s) \right\}=g_{\sigma+\gamma,\omega}^{\alpha, \mu+\beta}(x,s),
  \end{eqnarray}
where
\begin{eqnarray}\label{3.3}
g_{\sigma,\omega'}^{\alpha,\mu}(x,s):=\left( x-qs \right)_{q}^{\mu -1}e_{\alpha ,\mu}^{\sigma }\left[\omega^{\prime}\left( x-q^{\mu}s \right)_{q}^{\alpha };q \right].
\end{eqnarray}
\end{prop}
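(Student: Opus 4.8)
The plan is to expand both $q$-Prabhakar functions appearing in the integrand as power series, interchange the summations with the Jackson integral, reduce the resulting kernel to the Riemann--Liouville form $(\ref{2.14})$, and finally collapse the double series by means of Proposition \ref{prop2.3}. Throughout, I would keep in mind that $\omega'=q^{\gamma}\omega$, which is what couples the two expansions.

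First I would write the left-hand side explicitly. Substituting the outer function via $(\ref{2.13})$ and expanding $g_{\sigma,\omega'}^{\alpha,\mu}(t,s)$ through $(\ref{3.3})$ and $(\ref{2.13})$, the integrand becomes a double sum over $m,n\ge 0$ of
\[
\frac{(\gamma)_{m,q}\,\omega^{m}}{\Gamma_q(\alpha m+\beta)}\,\frac{(\sigma)_{n,q}\,(\omega')^{n}}{\Gamma_q(\alpha n+\mu)}\,(x-qt)_q^{\beta-1}(x-q^{\beta}t)_q^{\alpha m}(t-qs)_q^{\mu-1}(t-q^{\mu}s)_q^{\alpha n}.
\]
The convergence conditions $|\omega(x-q^{\beta}t)_q^{\alpha}|<(1-q)^{-\alpha}$ and $|\omega'(t-q^{\mu}s)_q^{\alpha}|<(1-q)^{-\alpha}$ furnish the absolute convergence needed to interchange the two sums with the Jackson integral.

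The crux of the argument is the factorization $(a-b)_q^{\lambda+\rho}=(a-b)_q^{\lambda}(a-bq^{\lambda})_q^{\rho}$, which follows directly from the infinite-product form of $(a-b)_q^{\alpha}$ recorded in the Preliminaries. Applied with $a=x$, $b=qt$, $\lambda=\beta-1$ it yields $(x-qt)_q^{\beta-1}(x-q^{\beta}t)_q^{\alpha m}=(x-qt)_q^{\alpha m+\beta-1}$, and with $a=t$, $b=qs$, $\lambda=\mu-1$ it yields $(t-qs)_q^{\mu-1}(t-q^{\mu}s)_q^{\alpha n}=(t-qs)_q^{\alpha n+\mu-1}$. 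This is exactly where the shifts $q^{\beta}t$ and $q^{\mu}s$ built into the definitions pay off, and I expect getting these powers to line up to be the main obstacle, since this is what lets the whole expression collapse. After this reduction the inner $q$-integral over $t$ from $qs$ to $x$ equals $\Gamma_q(\alpha m+\beta)$ times $I_{q,qs+}^{\alpha m+\beta}$ applied to $(t-qs)_q^{\alpha n+\mu-1}$, which by $(\ref{2.16})$ is
\[
\Gamma_q(\alpha m+\beta)\,\frac{\Gamma_q(\alpha n+\mu)}{\Gamma_q(\alpha(m+n)+\beta+\mu)}\,(x-qs)_q^{\alpha(m+n)+\beta+\mu-1}.
\]

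Finally I would substitute this back, cancel the factors $\Gamma_q(\alpha m+\beta)$ and $\Gamma_q(\alpha n+\mu)$ against the denominators, and use $\omega'=q^{\gamma}\omega$ so that $\omega^{m}(\omega')^{n}=\omega^{m+n}q^{\gamma n}$. Re-indexing by $N=m+n$ and summing over $n$ from $0$ to $N$, the coefficient becomes $\sum_{n=0}^{N}(\gamma)_{N-n,q}\,q^{\gamma n}(\sigma)_{n,q}$, which Proposition \ref{prop2.3} identifies with $(\gamma+\sigma)_{N,q}$. This gives
\[
\sum_{N=0}^{\infty}\frac{(\gamma+\sigma)_{N,q}\,\omega^{N}}{\Gamma_q(\alpha N+\beta+\mu)}\,(x-qs)_q^{\alpha N+\beta+\mu-1}.
\]
Applying the factorization identity once more, now with $a=x$, $b=qs$, $\lambda=\mu+\beta-1$, rewrites $(x-qs)_q^{\alpha N+\beta+\mu-1}$ as $(x-qs)_q^{\mu+\beta-1}(x-q^{\mu+\beta}s)_q^{\alpha N}$, so the series is recognized as $(x-qs)_q^{\mu+\beta-1}e_{\alpha,\mu+\beta}^{\sigma+\gamma}[\omega(x-q^{\mu+\beta}s)_q^{\alpha};q]=g_{\sigma+\gamma,\omega}^{\alpha,\mu+\beta}(x,s)$, which is the claimed right-hand side.
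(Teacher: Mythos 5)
Your proof is correct and follows essentially the same route as the paper's: expand both generalized $q$-Prabhakar functions as series, reduce the inner kernel to the Riemann--Liouville form so that (\ref{2.16}) applies, then collapse the resulting double series via the Cauchy product and Proposition \ref{prop2.3}. The only difference is cosmetic: you spell out the factorization $(a-b)_q^{\lambda+\rho}=(a-b)_q^{\lambda}(a-bq^{\lambda})_q^{\rho}$ that the paper uses silently when it rewrites the integral as $I_{q,qs+}^{\alpha n+\beta}$ acting on a power and when it reassembles the final series into $g_{\sigma+\gamma,\omega}^{\alpha,\mu+\beta}(x,s)$.
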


\begin{proof}
By using (\ref{3.1}), we have
\begin{eqnarray*}
    _{}^{P}I_{q, qs+}^{\alpha, \beta, \gamma ,\omega}
  \left\{g_{\sigma, \omega'}^{\alpha,\mu}(x,s) \right\}= {\int\limits_{qs}^x g_{\gamma,\omega}^{\alpha,\beta}(x,t) g_{\sigma,\omega'}^{\alpha, \mu}(x,s)}d_qt.
\end{eqnarray*}

Taking into account (\ref{2.13}) and using Definition \ref{def2.5}, we have
\begin{eqnarray*}
_{}^{P}I_{q, qs+}^{\alpha, \beta, \gamma ,\omega}
  \left\{g_{\sigma, \omega'}^{\alpha,\mu}(x,s) \right\}=\sum\limits_{n=0}^{\infty }
    {\left( \gamma  \right)}_{n,q}{\omega }^n\sum\limits_{k=0}^{\infty }{\frac{{q^{\gamma k}{\omega}^{k}\left( \sigma  \right)_{k,q}}}{{{\Gamma }_{q}}\left( \alpha k+\mu  \right)}I_{q,qs+}^{\alpha n+\beta }\left( x-qs \right)_{q}^{\alpha k+\mu -1}}.
\end{eqnarray*}
Hence, applying (\ref{2.16}) we obtain
\begin{eqnarray*}
_{}^{P}I_{q, qs+}^{\alpha, \beta, \gamma ,\omega}
  \left\{g_{\sigma, \omega'}^{\alpha,\mu}(x,s) \right\}=\sum\limits_{n=0}^{\infty }{{{\left( \gamma  \right)}_{n,q}}{{\omega }^{n}}}\sum\limits_{k=0}^{\infty }{{\left( \sigma  \right)}_{k,q}}q^{\gamma k}\omega^k \frac{{\left( x-qs \right)}_{q}^{\alpha k+\alpha n+\beta +\mu -1}}{{{\Gamma }_{q}}\left( \alpha n+\alpha k+\beta +\mu  \right)}.
\end{eqnarray*}

Using the Cauchy product formula (\cite{Kanutto}) and then taking into account (\ref{2.12}) and also the expansion  (\ref{2.13})  of the generalized $q$-Prabhakar function,  we derive
\begin{eqnarray*}
_{}^{P}I_{q, qs+}^{\alpha, \beta, \gamma ,\omega}
  \left\{g_{\sigma, \omega'}^{\alpha,\mu}(x,s) \right\}&=& \sum\limits_{n=0}^{\infty }{\frac{{{\left( x-qs \right)}_{q}^{\alpha n+\beta +\mu -1}}{{\omega }^{n}}}{{{\Gamma }_{q}}\left( \alpha n+\beta +\mu  \right)}}\sum\limits_{k=0}^{n}{{{\left( \gamma  \right)}_{n-k,q}}{{q}^{\gamma k}}{{\left( \sigma  \right)}_{k,q}}}\\&=&     \sum\limits_{n=0}^{\infty }{\frac{{{\left( \gamma +\sigma  \right)}_{n,q}}{{\omega }^{n}}}{{{\Gamma }_{q}}\left( \alpha n+\beta +\mu  \right)}}{{\left( x-qs \right)}_{q}^{\alpha n+\beta +\mu -1}}\\&=&
    \left( x-qs \right)_{q}^{\beta +\mu -1}e_{\alpha ,\beta +\mu }^{\gamma +\sigma }\left[ \omega \left( x-{{q}^{\beta +\mu }}s \right)_{q}^{\alpha };q \right]\\&=&g_{\sigma+\gamma,\omega}^{\alpha, \mu+\beta}(x,s),
\end{eqnarray*}
completing the proof.
\end{proof}

\begin{lem}\label{lemma3.3}  Let $f\in L^{p}_{q}[a,b]$ and $\alpha$, $\beta, \gamma, \omega, $ $\mu$ , $\sigma\in \mathbb{R}$ be such that $\alpha, \beta, \mu >0$. Then the following relation
\begin{eqnarray}\label{3.4}
\left(_{}^{P}I_{q,a+}^{\alpha, \beta, \gamma ,\omega} {_{}^{P}I_{q,a+}^{\alpha, \mu, \sigma,\omega'}} f \right)(x)=\left(
 _{}^{P}I_{q,a+}^{\alpha, \beta+\mu, \gamma +\sigma ,\omega} f \right)(x)
\end{eqnarray}
holds for all $x\in[a,b].$

In particular
\begin{eqnarray}\label{3.5}
\left( _{}^{P}I_{q,a+}^{\alpha, \beta, \gamma ,\omega} {_{}^{P}I_{q,a+}^{\alpha, \mu, -\gamma,\omega'}} f\right)(x)
    =\left(I_{q,a+}^{ \beta+\mu}f\right)(x) .
\end{eqnarray}
\end{lem}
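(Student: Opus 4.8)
The plan is to unfold the left-hand side of (\ref{3.4}) into an iterated Jackson integral, interchange the order of $q$-integration by a $q$-analogue of the Dirichlet formula, and then collapse the inner integral in one stroke using Proposition \ref{prop3.1}. First I would rewrite both Prabhakar operators in terms of the kernel (\ref{3.3}): by Definition \ref{def3.1} one has $\left(_{}^{P}I_{q,a+}^{\alpha,\beta,\gamma,\omega}f\right)(x)=\int_a^x g_{\gamma,\omega}^{\alpha,\beta}(x,t)f(t)\,d_qt$ and $\left(_{}^{P}I_{q,a+}^{\alpha,\mu,\sigma,\omega'}f\right)(t)=\int_a^t g_{\sigma,\omega'}^{\alpha,\mu}(t,\tau)f(\tau)\,d_q\tau$, so that the composition becomes the double integral
\begin{eqnarray*}
\left(_{}^{P}I_{q,a+}^{\alpha,\beta,\gamma,\omega}{_{}^{P}I_{q,a+}^{\alpha,\mu,\sigma,\omega'}}f\right)(x)=\int_a^x g_{\gamma,\omega}^{\alpha,\beta}(x,t)\left(\int_a^t g_{\sigma,\omega'}^{\alpha,\mu}(t,\tau)f(\tau)\,d_q\tau\right)d_qt .
\end{eqnarray*}

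Next I would appeal to the $q$-Dirichlet interchange formula
\begin{eqnarray*}
\int_a^x\int_a^t h(t,\tau)\,d_q\tau\,d_qt=\int_a^x\left(\int_{q\tau}^x h(t,\tau)\,d_qt\right)d_q\tau ,
\end{eqnarray*}
which can be verified directly from the double-series form of the Jackson integral: writing $t=xq^k$, $\tau=xq^m$ and regrouping the resulting sum over $0\le k\le m$ produces precisely the lower limit $q\tau$ in the inner integral. Its legitimacy here rests on absolute convergence of the doubly infinite $q$-series, which is guaranteed by $f\in L_q^p[a,b]$ together with the kernel bound furnished by Lemma \ref{lemma}. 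Applying it with $h(t,\tau)=g_{\gamma,\omega}^{\alpha,\beta}(x,t)g_{\sigma,\omega'}^{\alpha,\mu}(t,\tau)f(\tau)$ recasts the composition as
\begin{eqnarray*}
\int_a^x f(\tau)\left(\int_{q\tau}^x g_{\gamma,\omega}^{\alpha,\beta}(x,t)g_{\sigma,\omega'}^{\alpha,\mu}(t,\tau)\,d_qt\right)d_q\tau .
\end{eqnarray*}

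The point of arranging the lower limit as $q\tau$ is that the inner integral is now, by Definition \ref{def3.1}, exactly $\left(_{}^{P}I_{q,q\tau+}^{\alpha,\beta,\gamma,\omega}\left\{g_{\sigma,\omega'}^{\alpha,\mu}(x,\tau)\right\}\right)$, so Proposition \ref{prop3.1} with $s=\tau$ collapses it instantly to $g_{\sigma+\gamma,\omega}^{\alpha,\mu+\beta}(x,\tau)$. Substituting this back leaves
\begin{eqnarray*}
\int_a^x g_{\sigma+\gamma,\omega}^{\alpha,\mu+\beta}(x,\tau)f(\tau)\,d_q\tau=\left(_{}^{P}I_{q,a+}^{\alpha,\beta+\mu,\gamma+\sigma,\omega}f\right)(x),
\end{eqnarray*}
which is the right-hand side of (\ref{3.4}) once one uses $\mu+\beta=\beta+\mu$ and $\sigma+\gamma=\gamma+\sigma$. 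Note that the bookkeeping $\omega'=q^{\gamma}\omega$ of the inner operator is precisely what matches the hypothesis of Proposition \ref{prop3.1}, so no extra juggling of the weights is needed.

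Finally, (\ref{3.5}) follows by specialising $\sigma=-\gamma$ in (\ref{3.4}): the resulting operator carries third parameter $\gamma+\sigma=0$, and since $(0)_{n,q}=(1;q)_n/(q;q)_n$ vanishes for $n\ge1$ and equals $1$ for $n=0$, the generalized $q$-Prabhakar function degenerates to $e_{\alpha,\beta+\mu}^{0}[\,\cdot\,;q]=1/\Gamma_q(\beta+\mu)$; the kernel then reduces to $(x-qt)_q^{\beta+\mu-1}/\Gamma_q(\beta+\mu)$, which by Definition \ref{def2.5} is exactly the Riemann--Liouville $q$-fractional kernel, yielding $\left(I_{q,a+}^{\beta+\mu}f\right)(x)$. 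I expect the main obstacle to lie in the interchange step: one must confirm that the Jackson integral genuinely forces the lower limit $q\tau$ (rather than $\tau$ or $a$) and rigorously justify rearranging the doubly infinite $q$-series, which is exactly where the integrability of $f$ and the kernel estimate of Lemma \ref{lemma} enter. Everything after the interchange is a direct invocation of Proposition \ref{prop3.1} and requires no further computation.
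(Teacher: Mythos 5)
Your proof is correct and takes essentially the same route as the paper's: unfold the composition into an iterated Jackson integral, interchange the order of $q$-integration so the inner integral runs from $q\tau$ to $x$, collapse it by Proposition \ref{prop3.1}, and get (\ref{3.5}) by setting $\sigma=-\gamma$ and using $e^{0}_{\alpha,\beta+\mu}[\,\cdot\,;q]=1/\Gamma_q(\beta+\mu)$. The only differences are cosmetic and in your favor: you actually justify the interchange step (the paper simply says ``by changing the order of integration''), and you write the inner kernel correctly as $g_{\sigma,\omega'}^{\alpha,\mu}(t,\tau)$, a function of the integration variable, where the paper's displayed formula has a typo ($g_{\sigma,\omega'}^{\alpha,\mu}(x,s)$).
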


\begin{proof}
By Definition  \ref{3.1} of the Prabhakar fractional $q$-integral operator and taking into account notation (\ref{3.3}), we have \begin{eqnarray*}
    \left(_{}^{P}I_{q,a+}^{\alpha, \beta, \gamma ,\omega} {_{}^{P}I_{q,a+}^{\alpha, \mu, \sigma,\omega'}} f \right)(x)=
\int\limits_{a}^{x}g_{\gamma,\omega}^{\alpha,\beta}(x,t)\int\limits_{a}^{t}g_{\sigma,\omega'}^{\alpha,\mu}(x,s)f(s)
    d_qs
    d_qt.
   \end{eqnarray*}

Hence, by changing the order of integration and using Definition \ref{def3.1}, we get
\begin{eqnarray*}
    \left(_{}^{P}I_{q,a+}^{\alpha, \beta, \gamma ,\omega} {_{}^{P}I_{q,a+}^{\alpha, \mu, \sigma,\omega'}} f\right)(x)&=&\int\limits_{a}^{x}f(s)
    d_qs\int\limits_{qs}^{x}g_{\gamma,\omega}^{\alpha,\beta}(x,t)g_{\sigma,\omega'}^{\alpha,\mu}(x,s)
    d_qt\\&=&
    \int\limits_{a}^{x}{_{}^{P}I_{q, qs+}^{\alpha, \beta, \gamma ,\omega}}[g_{\sigma,\omega'}^{\alpha,\mu}(x,s)]
  f(s)d_qs.
\end{eqnarray*}

Applying Proposition \ref{prop3.1} and taking into account Definition \ref{def3.1}, we obtain
\begin{eqnarray*}
    \left(_{}^{P}I_{q,a+}^{\alpha, \beta, \gamma ,\omega}
 {_{}^{P}I_{q,a+}^{\alpha, \mu, \sigma,\omega'}} f\right)(x)=
       \int\limits_{a}^{x} g_{\gamma+\sigma,\omega}^{\alpha, \beta+\mu}(x,s)f(s)d_qs=\left( _{}^{P}I_{q,a+}^{\alpha,\beta+\mu, \gamma +\sigma ,\omega} f\right)(x).
\end{eqnarray*}

By putting $\sigma=-\gamma$ and taking $e^{0}_{\alpha,\beta}(z)=1/{\Gamma_q(\beta)}$ into account from the last one can easily obtain  (\ref{3.5}).

The proof of Lemma \ref{lemma3.3} is complete.
\end{proof}

\begin{lem}\label{lem3.4}
Let $\alpha, \beta, \gamma, \omega \in \mathbb{R}$ be such that $\alpha, \beta>0$ $|\gamma|<1$, $|\omega(b-q^{\beta+1}a)_{q}^{\alpha}|<(1-q)^{\alpha}$ and $1\leq p <\infty$.
Then the Prabhakar fractional $q$-integral operator  $_{}^{P}I_{q,a+}^{\alpha, \beta, \gamma,\omega}$ is bounded in $ L_{q}^{p}[a,b]$:
\begin{eqnarray}\label{eq2.11}
    \left\| {_{}^{P}I_{q,a+}^{\alpha, \beta, \gamma,\omega}f} \right\|_{L_{q}^{p}[a,b]}\le M \left\| {f} \right\|_{L_{q}^{p}[a,b]},
\end{eqnarray}
where
\begin{eqnarray}\label{cons}
    M=(b-qa)_q^{\beta}e_{\alpha, \beta+1}[(b-q^{\beta+1}a)_{q}^{\alpha};q].
\end{eqnarray}
\end{lem}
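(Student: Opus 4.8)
The plan is to reduce the Prabhakar operator to a series of ordinary Riemann--Liouville $q$-fractional integrals and then invoke the already-established boundedness of the latter (Lemma \ref{lemma}, with $K=\frac{(b-qa)_q^\alpha}{\Gamma_q(\alpha+1)}$). First I would insert the series (\ref{2.13}) for $e_{\alpha,\beta}^{\gamma}$ into the defining integral (\ref{3.1}) and interchange summation with the Jackson integral, so that the kernel becomes $\sum_{n=0}^\infty \frac{(\gamma)_{n,q}\omega^n}{\Gamma_q(\alpha n+\beta)}(x-qt)_q^{\beta-1}(x-q^\beta t)_q^{\alpha n}$. The decisive algebraic step is the factorisation identity $(x-qt)_q^{\beta-1}(x-q^\beta t)_q^{\alpha n}=(x-qt)_q^{\alpha n+\beta-1}$, which I would verify from the infinite-product form $(A-B)_q^\lambda = A^\lambda (B/A;q)_\infty/(q^\lambda B/A;q)_\infty$; the telescoping of the denominators is exactly what the shifted argument $q^\beta t$ is designed to produce, and it is the same mechanism already exploited in Proposition \ref{prop3.1}. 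This turns the $n$-th term into $\Gamma_q(\alpha n+\beta)$ times the kernel of $I_{q,a+}^{\alpha n+\beta}$, giving the representation $_{}^{P}I_{q,a+}^{\alpha,\beta,\gamma,\omega}f=\sum_{n=0}^\infty (\gamma)_{n,q}\omega^n\, I_{q,a+}^{\alpha n+\beta}f$.

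From here the estimate is routine. Applying the $L_q^p$-norm, the triangle inequality, and Lemma \ref{lemma} to each summand gives $\|_{}^{P}I_{q,a+}^{\alpha,\beta,\gamma,\omega}f\|_{L_q^p}\le \left(\sum_{n=0}^\infty |(\gamma)_{n,q}|\,|\omega|^n \frac{(b-qa)_q^{\alpha n+\beta}}{\Gamma_q(\alpha n+\beta+1)}\right)\|f\|_{L_q^p}$. I would then split the power by the analogue of the previous identity, namely $(b-qa)_q^{\alpha n+\beta}=(b-qa)_q^\beta (b-q^{\beta+1}a)_q^{\alpha n}$, pull the $n$-independent factor $(b-qa)_q^\beta$ out of the sum, and compare the remaining series with the defining series (\ref{2.8}) of the $q$-Mittag--Leffler function $e_{\alpha,\beta+1}[(b-q^{\beta+1}a)_q^\alpha;q]$. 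Dominating the coefficients by $|(\gamma)_{n,q}|\,|\omega|^n\le 1$ bounds the coefficient series termwise by $\sum_{n} (b-q^{\beta+1}a)_q^{\alpha n}/\Gamma_q(\alpha n+\beta+1)$, producing exactly the constant $M$ of (\ref{cons}).

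Two points require care, and this is where the hypotheses enter. The interchange of the sum with the Jackson integral must be justified; here the growth condition $|\omega(b-q^{\beta+1}a)_q^\alpha|<(1-q)^\alpha$ guarantees absolute convergence of the kernel series uniformly in $t\in[a,x]$, so the order of the two summations may be reversed. The genuinely delicate step, which I expect to be the main obstacle, is the coefficient bound $|(\gamma)_{n,q}|\,|\omega|^n\le1$: the factor $|(\gamma)_{n,q}|=\prod_{i=0}^{n-1}|1-q^{\gamma+i}|/(1-q^{i+1})\le 1$ has to be controlled using $|\gamma|<1$ (so that no factor of the $q$-Pochhammer ratio exceeds $1$), while the smallness condition on $\omega$ must absorb the powers $|\omega|^n$. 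Verifying that these two hypotheses together are precisely strong enough to yield the clean constant $M$ (which itself carries no explicit $\omega$-dependence) is the part demanding the most attention.
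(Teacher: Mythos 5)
Your route is genuinely different from the paper's. You expand the kernel in series and write $_{}^{P}I_{q,a+}^{\alpha,\beta,\gamma,\omega}f=\sum_{n=0}^{\infty}(\gamma)_{n,q}\,\omega^{n}\,I_{q,a+}^{\alpha n+\beta}f$, then apply Minkowski's inequality and Lemma \ref{lemma} termwise. The paper instead works directly with the kernel $g_{\gamma,|\omega|}^{\alpha,\beta}(x,t)$: it applies the H\"older--Rogers inequality to split the $p$-th power, bounds the two marginal integrals $\int_a^x g\,d_qt$ and $\int_{qt}^b g\,d_qx$ by the same constant $M$ (a Schur-type test), and combines them into $M^{p/p'+1}=M^{p}$. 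Your factorisation $(x-qt)_q^{\beta-1}(x-q^{\beta}t)_q^{\alpha n}=(x-qt)_q^{\alpha n+\beta-1}$ is correct (it follows from the infinite-product representation and is the same mechanism the paper uses implicitly when computing $J_{21}$), the splitting $(b-qa)_q^{\alpha n+\beta}=(b-qa)_q^{\beta}(b-q^{\beta+1}a)_q^{\alpha n}$ is correct, and the reduction to Lemma \ref{lemma} is legitimate and arguably cleaner, since it needs only one kernel estimate rather than two.

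However, the final step is a genuine gap, exactly where you suspected it. The bound $|(\gamma)_{n,q}|\,|\omega|^{n}\le 1$ requires $|\omega|\le 1$, and nothing in the hypotheses gives this: the condition $|\omega(b-q^{\beta+1}a)_q^{\alpha}|<(1-q)^{\alpha}$ only couples $\omega$ to the size of $(b-q^{\beta+1}a)_q^{\alpha}$, and when that quantity is small (e.g.\ $a=0$ and $b$ small) it permits $|\omega|$ to be arbitrarily large; it guarantees convergence of the series, not termwise domination by the $\omega$-free series. The correct conclusion of your argument is the estimate with constant $(b-qa)_q^{\beta}\,e_{\alpha,\beta+1}\bigl[\,|\omega|(b-q^{\beta+1}a)_q^{\alpha};q\bigr]$, i.e.\ with $|\omega|^{n}$ kept inside the sum. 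This is in fact exactly what the paper's own proof establishes: there $J_{21}(x)\le (b-qa)_q^{\beta}e_{\alpha,\beta+1}[\omega(b-q^{\beta+1}a)_q^{\alpha};q]$, with $\omega$ present, so the $\omega$-free constant in (\ref{cons}) is an internal inconsistency (evidently a typo) of the paper, not a target you can reach. So: keep $|\omega|^{n}$, use $|\gamma|<1$ only to discard $(\gamma)_{n,q}$, and your proof closes with the same constant the paper's proof actually produces. (One further caveat, shared with the paper: the coefficient bound $|(\gamma)_{n,q}|\le 1$ is clean for $0\le\gamma<1$, but for $-1<\gamma<0$ the first factor $|1-q^{\gamma}|/(1-q)$ can exceed $1$, so that step too deserves more care than either you or the authors give it.)
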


\begin{proof}
 Taking into account Definition \ref{def3.1} and notation (\ref{3.3}), we have
\begin{eqnarray}\label{3.7}
 \left\| {_{}^{P}I_{q,a+}^{\alpha, \beta, \gamma,\omega}f} \right\|^p_{L_{q}^{p}[a,b]}=  \int\limits_{a}^{b}\left| \int\limits_{a}^{x}{g_{\gamma,\omega}^{\alpha,\beta}(x,t)f(t)d_qt}\right|^{p}d_qx \leq  \int\limits_{a}^{b}J_2(x)d_qx,
        \end{eqnarray}
where
\begin{eqnarray*}
 J_2(x):=\left\{\int\limits_{a}^{x}{g_{\gamma,|\omega|}^{\alpha,\beta}(x,t)|f(t)|d_qt}\right\}^p.
\end{eqnarray*}

For $p>1$,  we  define $p^{\prime}$ from the equality $\frac{1}{p}+\frac{1}{p^{\prime}}=1$. Applying the H\"{o}lder-Rogers inequality to $J_2(x)$,  we get
\begin{eqnarray}
 J_2(x) &\leq&
    \left( \int\limits_{a}^{x}{g_{\gamma,|\omega|}^{\alpha,\beta}(x,t)d_qt}\right)^{{p}/{p^{\prime}}}\nonumber  \int\limits_{a}^{x}{g_{\gamma,|\omega|}^{\alpha,\beta}(x,t)|f(t)|^{p}d_qt}\\
  &=&J^{{p}/{p'}}_{21}(x)\times J_{22}(x),
\end{eqnarray}
where
\begin{eqnarray*}
    J_{21}(x)&:=&\int\limits_{a}^{x}{g_{\gamma,|\omega|}^{\alpha,\beta}(x,t)d_qt},\\  J_{22}(x)&=&\int\limits_{a}^{x}{g_{\gamma,|\omega|}^{\alpha,\beta}(x,t)|f(t)|^{p}d_qt}.
\end{eqnarray*}

Let us consider $J_{21}(x)$. We show that the following inequality is true:
\begin{eqnarray}\label{3.9}
    J_{21}(x)\leq M,
\end{eqnarray}
where $M$ is a constant defined by (\ref{cons}). Indeed, taking into account (\ref{3.3}) and (\ref{2.13}) and using formulas  (\ref{2.5}), (\ref{2.7}), we have  \begin{eqnarray*}
J_{21}(x)&=&     \sum\limits_{n=0}^{\infty}\frac{(\gamma)_{n,q}|\omega|^{n}}{\Gamma_q(\alpha n+\beta+1)}{(x-a)_{q}^{\alpha n+\beta}}.
  \end{eqnarray*}
 Since $|\gamma|<1$, we have $(\gamma)_{n,q}<1$. Taking this into account and using \begin{eqnarray*}
 (b-a)_{q}^{\delta}<(b-aq)_{q}^{\delta} \quad (\delta>0),
 \end{eqnarray*}
 we have
       \begin{eqnarray*}
J_{21} (x)      &\leq&     \sum\limits_{n=0}^{\infty}\frac{|\omega|^{n}}{\Gamma_q(\alpha n+\beta+1)}{(b-a)_{q}^{\alpha n+\beta}}
              \\&\leq&     \sum\limits_{n=0}^{\infty}\frac{|\omega|^{n}}{\Gamma_q(\alpha n+\beta+1)}{(b-aq)_{q}^{\alpha n+\beta}}
              \\&=&(b-aq)^{\beta}     \sum\limits_{n=0}^{\infty}\frac{|\omega|^{n}{(b-a^{\beta+1}q)_{q}^{\alpha n}}}{\Gamma_q(\alpha n+\beta+1)}\\&=&
              (b-qa)_q^{\beta}e_{\alpha, \beta+1}[\omega(b-q^{\beta+1}a)_{q}^{\alpha};q]=M.
         \end{eqnarray*}

We note that  the conditions $\alpha, \beta>0$ and $|\omega(b-q^{\beta+1}a)_{q}^{\alpha}|<(1-q)^{\alpha}$ justify the  convergence of series of the function $e_{\alpha, \beta+1}[\omega(b-q^{\beta+1}a)_{q}^{\alpha};q]$.

Then, by virtue of (\ref{3.9}) , we obtain the following inequality
\begin{eqnarray*}
J_2(x)\leq M^{\frac{p}{p^{\prime}}}J_{22}(x).
\end{eqnarray*}

Taking the last inequality  into account  from  (\ref{3.7}), we get
\begin{eqnarray}\label{3.10}
\left\| {_{}^{P}I_{q,a+}^{\alpha, \beta, \gamma,\omega}f} \right\|^p_{L_{q}^{p}[a,b]} &\leq& M^{\frac{p}{p'}}
         \int\limits_{a}^{b} J_{22}(x) d_qx.
\end{eqnarray}

Substituting the expression of $J_{22}(x)$ into (\ref{3.10}), changing the order of integration, and using (\ref{3.3}),(\ref{2.13}) and (\ref{2.6}), we get
\begin{eqnarray*}
J_q(f)&\leq & M^{\frac{p}{p'}}
         \int\limits_{a}^{b}|f(t)|^{p}d_qt\int\limits_{qt}^{b}g_{\gamma,|\omega|}^{\alpha,\beta}(x,t)d_qx        \\ &=&
M^\frac{p}{p'}
      \int\limits_{a}^{b}|f(t)|^{p}d_qt\sum\limits_{n=0}^{\infty}\frac{(\gamma)_{n,q}|\omega|^{n}}{\Gamma_q(\alpha n+\beta)} \int\limits_{qt}^{b}(x-t)_{q}^{\alpha n+\beta-1}d_qx
        \\ &=&
M^\frac{p}{p'}
 \sum\limits_{n=0}^{\infty}\frac{(\gamma)_{n,q}|\omega|^{n}}{\Gamma_q(\alpha n+\beta+1)}\int\limits_{a}^{b}|f(t)|^{p} (b-qt)_{q}^{\alpha n+\beta}d_qt
          \\ &\leq&
M^\frac{p}{p'}
  \sum\limits_{n=0}^{\infty}\frac{(\gamma)_{n,q}|\omega|^{n}}{\Gamma_q(\alpha n+\beta+1)}(b-qa)_{q}^{\alpha n+\beta}\int\limits_{a}^{b}|f(t)|^{p} d_qt
        \\ &=&
M^{\frac{p}{p'}+1}
       \int\limits_{a}^{b}|f(t)|^{p} d_qt  \\
       &=&M^p\left\| {f} \right\|^p_{L_{q}^{p}[a,b]}.
        \end{eqnarray*}

Lemma \ref{3.4} is proved.
    \end{proof}

 Now, we give the definition of the Prabhakar fractional $q$-differential operator.

\begin{definition}\label{def3.5}
Let $f\in L_q^{1}[a,b] $ , ${_{}^{P}}I_{q,a+}^{\alpha ,n -\beta ,-\gamma ,\omega} f \in AC_{q}^{n}[a,b]$ and     $\alpha ,\beta ,\gamma ,\delta \in \mathbb{R}$  with $ \alpha>0$ and $\beta>0$. Then the Prabhakar fractional $q$-differential operator $_{}^{P}D_{a,a+}^{\alpha,\beta,\gamma, \omega }$ is defined by
\begin{eqnarray} \label{3.11} \left(_{}^{P}D_{q,a+}^{\alpha ,\beta ,\gamma, \omega } f\right)\left( x \right):=\left(D_{q,a+}^{n}{_{}^{P}}I_{q,a+}^{\alpha ,n -\beta ,-\gamma ,\omega} f\right) \left( x \right),
\end{eqnarray}
where $n=\lceil \beta \rceil$.
\end{definition}

\begin{thm}\label{Theorem3.6}

Let $\alpha,$ $\beta$, $\gamma$,  $\omega\in \mathbb{R}$ with $\alpha>0$ and $\beta>0$. Then for any function $f\in L_{q}^{1}[a,b]$ the following equality is valid
\begin{eqnarray}\label{3.12}
\left(_{}^{P}D_{q,a+}^{\alpha ,\beta ,\gamma ,\omega }{_{}^{P}}I_{q,a+}^{\alpha ,\beta ,\gamma ,\omega' }f\right)\left( x \right)=f(x).
\end{eqnarray}
\end{thm}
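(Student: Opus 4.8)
The plan is to reduce the composition to the semigroup law already proved in Lemma~\ref{lemma3.3}, and then to cancel an integer-order $q$-integral against an integer-order $q$-derivative using the elementary identity from the preliminaries. First I would unfold the Prabhakar fractional $q$-differential operator by its definition. Setting $n=\lceil\beta\rceil$ and applying Definition~\ref{def3.5} to the outer operator, the left-hand side of \eqref{3.12} becomes
$$\left(_{}^{P}D_{q,a+}^{\alpha ,\beta ,\gamma ,\omega }\,{_{}^{P}}I_{q,a+}^{\alpha ,\beta ,\gamma ,\omega' }f\right)(x)=\left(D_{q,a+}^{n}\,{_{}^{P}}I_{q,a+}^{\alpha ,n-\beta ,-\gamma ,\omega}\,{_{}^{P}}I_{q,a+}^{\alpha ,\beta ,\gamma ,\omega' }f\right)(x),$$
so the task splits into first simplifying the inner composition of two Prabhakar fractional $q$-integral operators and then applying the integer-order $q$-derivative $D_{q,a+}^{n}$.

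Second, I would invoke the composition identity of Lemma~\ref{lemma3.3}. In the bracketed expression the two Prabhakar $q$-integral operators carry Prabhakar indices $-\gamma$ and $\gamma$, and their weights are linked through the prescribed primed convention, which is exactly the pairing demanded by \eqref{3.4}. The decisive feature is that the two indices add to $-\gamma+\gamma=0$; hence the special case \eqref{3.5} applies and the composition collapses to the ordinary Riemann–Liouville $q$-integral of integer order,
$$\left({_{}^{P}}I_{q,a+}^{\alpha ,n-\beta ,-\gamma ,\omega}\,{_{}^{P}}I_{q,a+}^{\alpha ,\beta ,\gamma ,\omega' }f\right)(x)=\left(I_{q,a+}^{(n-\beta)+\beta}f\right)(x)=\left(I_{q,a+}^{n}f\right)(x),$$
where one uses $e_{\alpha ,n}^{0}=1/\Gamma_q(n)$ to identify the degenerate $q$-Prabhakar kernel with the kernel of the $n$-fold $q$-integral $I_{q,a+}^{n}$.

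Third, it remains to differentiate. Applying $D_{q,a+}^{n}$ to $I_{q,a+}^{n}f$ and using the preliminary relation $\left(D_q I_{q,a+}g\right)(x)=g(x)$ iterated $n$ times — equivalently, arguing by induction on $n$ and peeling off one pair $D_q I_{q,a+}$ at each step — gives $\left(D_{q,a+}^{n}I_{q,a+}^{n}f\right)(x)=f(x)$, which is precisely the asserted identity \eqref{3.12}.

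I expect the main obstacle to be the bookkeeping in the second step. One must apply the weight convention consistently to the operator sitting inside the differential operator, whose Prabhakar index is $-\gamma$ rather than $\gamma$, so that the \emph{special} case \eqref{3.5} — not merely the general \eqref{3.4} — can be invoked and the index genuinely cancels to zero; any sign slip in the primed weight would leave a nontrivial Prabhakar index and the argument would not terminate in $I_{q,a+}^{n}$. A secondary point is to verify that the standing hypotheses place the intermediate functions in the spaces required by Lemma~\ref{lemma3.3} and Definition~\ref{def3.5} — in particular that $_{}^{P}I_{q,a+}^{\alpha ,n-\beta ,-\gamma ,\omega}$ applied to $_{}^{P}I_{q,a+}^{\alpha ,\beta ,\gamma ,\omega'}f$ lands in $AC_q^{n}[a,b]$ — so that the iterated $q$-derivative in the final step is legitimate.
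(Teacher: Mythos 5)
Your proposal is correct and follows essentially the same route as the paper's proof: unfold ${}^{P}D_{q,a+}^{\alpha,\beta,\gamma,\omega}$ via Definition~\ref{def3.5}, collapse the resulting composition ${}^{P}I_{q,a+}^{\alpha,n-\beta,-\gamma,\omega}\,{}^{P}I_{q,a+}^{\alpha,\beta,\gamma,\omega'}$ to $I_{q,a+}^{n}$ by the special case \eqref{3.5} of Lemma~\ref{lemma3.3}, and then cancel $D_{q,a+}^{n}I_{q,a+}^{n}$ (the paper invokes Lemma~\ref{lemma2.8} for this last cancellation, whereas you iterate $D_qI_{q,a+}=\mathrm{id}$ — an immaterial difference). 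The two caveats you flag, the sign/weight bookkeeping in the primed convention and the $AC_q^{n}$ regularity needed to legitimize Definition~\ref{def3.5}, are genuine points of care, and the paper itself passes over both in silence.
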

\begin{proof}
Using Definition \ref{def3.5} and formula (\ref{3.4}) and also Lemma \ref{lemma2.8}, we have
\begin{eqnarray*}    _{}^{P}D_{q,a+}^{\alpha ,\beta ,\gamma,
    \omega } \left( _{}^{P}I_{q,a+}^{\alpha ,\beta ,\gamma ,\omega'}f \right)\left( x \right)&=&D_{q,a+}^{n}\left(
 {_{}^{P}I_{q,a+}^{\alpha ,n-\beta ,-\gamma, \omega}}
 _{}^{P}I_{q,a+}^{\alpha ,\beta ,\gamma,\omega' }f \right)\left( x \right)\\&=&
  D_{q,a+}^{n}\left( I_{q,a+}^{n}f \right)\left( x \right)=f\left( x \right).
\end{eqnarray*}
The proof is complete. \end{proof}

In the classical case, Prabhakar fractional integral operators' semi-group property is commutative, but in the $q$-calculus case this property is non-commutative. To deal with this problem we need to introduce the following operator which affects only one parameter of the Prabhakar fractional $q$-operators.

We introduce the  operator $\Lambda_{q}^{n\gamma,\omega}$ defined by setting
\begin{eqnarray*}
    \Lambda_{q}^{n\gamma,\omega}\omega:=q^{n\gamma}\omega, \quad n \in \mathbb{N}.
\end{eqnarray*}
 For instance,  $\Lambda_{q}^{n\gamma,\omega}f(\delta, \omega)=f(\delta,q^{n\gamma}\omega)$.

Using this operator we present some other properties of $q$-Prabhakar operators.

\begin{thm}
Let $f \in L_{q}^{1}[a,b]$, ${_{}^{P}}I_{q,a+}^{\alpha ,1-\beta, -\gamma, \omega}f\in AC_q[a,b]$ and $\alpha$, $\beta$, $\gamma$, $\omega \in \mathbb{R}$ with $\alpha>0$, $0<\beta \le 1$. Then
\begin{eqnarray}\label{3.13}
\left({_{}^{P}}I_{q,a+}^{\alpha ,\beta, \gamma, \omega'}{_{}^{P}}D_{q,a+}^{\alpha ,\beta ,\gamma ,\omega } f\right)(x)&=&
f(x)\nonumber\\
&-& g_{\gamma,\omega'}^{\alpha,\beta}(x,a/q)\left({_{}^{P}}I_{q,a+}^{\alpha ,1-\beta, -\gamma, \omega}f\right)(a+).
\end{eqnarray}

\end{thm}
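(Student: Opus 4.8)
The plan is to exploit that $0<\beta\le 1$ forces $n=\lceil\beta\rceil=1$, so that by Definition \ref{def3.5} the Prabhakar $q$-derivative reduces to $\left({_{}^{P}}D_{q,a+}^{\alpha,\beta,\gamma,\omega}f\right)(x)=\left(D_q\,{_{}^{P}}I_{q,a+}^{\alpha,1-\beta,-\gamma,\omega}f\right)(x)$. I abbreviate $\Psi:={_{}^{P}}I_{q,a+}^{\alpha,1-\beta,-\gamma,\omega}f$, so that the boundary quantity $\left({_{}^{P}}I_{q,a+}^{\alpha,1-\beta,-\gamma,\omega}f\right)(a+)$ in the statement is $\Psi(a+)$, and I set $A:={_{}^{P}}I_{q,a+}^{\alpha,\beta,\gamma,\omega'}{_{}^{P}}D_{q,a+}^{\alpha,\beta,\gamma,\omega}f$ for the left-hand side. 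Rather than computing $A$ directly, I would show that the error $f-A$ lies in the kernel of ${_{}^{P}}D_{q,a+}^{\alpha,\beta,\gamma,\omega}$ and then identify that kernel explicitly.

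First I would apply ${_{}^{P}}D_{q,a+}^{\alpha,\beta,\gamma,\omega}$ to $f-A$. Since $A={_{}^{P}}I_{q,a+}^{\alpha,\beta,\gamma,\omega'}\bigl({_{}^{P}}D_{q,a+}^{\alpha,\beta,\gamma,\omega}f\bigr)$, Theorem \ref{Theorem3.6} (which is precisely ${_{}^{P}}D_{q,a+}^{\alpha,\beta,\gamma,\omega}{_{}^{P}}I_{q,a+}^{\alpha,\beta,\gamma,\omega'}=\mathrm{id}$) gives ${_{}^{P}}D_{q,a+}^{\alpha,\beta,\gamma,\omega}A={_{}^{P}}D_{q,a+}^{\alpha,\beta,\gamma,\omega}f$, whence ${_{}^{P}}D_{q,a+}^{\alpha,\beta,\gamma,\omega}(f-A)=0$. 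Unwinding the definition, this means $D_q\bigl[{_{}^{P}}I_{q,a+}^{\alpha,1-\beta,-\gamma,\omega}(f-A)\bigr]=0$, so that ${_{}^{P}}I_{q,a+}^{\alpha,1-\beta,-\gamma,\omega}(f-A)=C$ is a constant, using that a $q$-absolutely continuous function with vanishing $q$-derivative is constant.

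Next I would invert this relation. Reading Theorem \ref{Theorem3.6} with the parameters $(1-\beta,-\gamma)$ and fourth parameter $\omega'$, the operator ${_{}^{P}}D_{q,a+}^{\alpha,1-\beta,-\gamma,\omega'}$ is a left inverse of ${_{}^{P}}I_{q,a+}^{\alpha,1-\beta,-\gamma,\omega}$ (the compatibility $\omega=q^{-\gamma}\omega'$ being exactly that of \eqref{3.4}), so $f-A=C\,{_{}^{P}}D_{q,a+}^{\alpha,1-\beta,-\gamma,\omega'}[1]$. Evaluating the right-hand side is then a routine calculation: by Definition \ref{def3.5} it equals $D_q\,{_{}^{P}}I_{q,a+}^{\alpha,\beta,\gamma,\omega'}[1]$, and expanding ${_{}^{P}}I_{q,a+}^{\alpha,\beta,\gamma,\omega'}[1]$ through \eqref{2.16} and then $q$-differentiating term by term via \eqref{2.5}--\eqref{2.6} collapses the series to precisely $g_{\gamma,\omega'}^{\alpha,\beta}(x,a/q)$; the shift $a/q$ emerges from matching the resulting powers $(x-a)_q^{\alpha n+\beta-1}=\bigl(x-q\cdot(a/q)\bigr)_q^{\alpha n+\beta-1}$ against the definition \eqref{3.3} of $g$.

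It remains to pin down the constant $C$. Here I would write $C={_{}^{P}}I_{q,a+}^{\alpha,1-\beta,-\gamma,\omega}(f-A)=\Psi-{_{}^{P}}I_{q,a+}^{\alpha,1-\beta,-\gamma,\omega}A$ and pass to the limit $x\to a+$: the second summand is a Prabhakar fractional $q$-integral of positive order $1-\beta$ of the $q$-integrable function $A$, hence tends to $0$ at the lower endpoint, leaving $C=\Psi(a+)$. Substituting into $f-A=C\,g_{\gamma,\omega'}^{\alpha,\beta}(x,a/q)$ yields the asserted identity. I expect the main obstacle to be exactly this determination of $C$: it \emph{cannot} be obtained from the semigroup law, since the composition ${_{}^{P}}I_{q,a+}^{\alpha,1-\beta,-\gamma,\omega}{_{}^{P}}I_{q,a+}^{\alpha,\beta,\gamma,\omega'}$ is not covered by Lemma \ref{lemma3.3} --- its two fourth-parameters $\omega$ and $\omega'=q^{\gamma}\omega$ violate the compatibility forced by \eqref{3.4}, which is the non-commutativity phenomenon flagged above. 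Consequently the endpoint/limit analysis (controlling ${_{}^{P}}I_{q,a+}^{\alpha,1-\beta,-\gamma,\omega}A$ as $x\to a+$ and making rigorous sense of $\Psi(a+)$ under the hypothesis $\Psi\in AC_q[a,b]$), together with the justification of the kernel characterization $D_qF=0\Rightarrow F\equiv\mathrm{const}$ in the $q$-setting, are the delicate points on which the proof hinges.
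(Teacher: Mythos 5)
Your skeleton coincides with the paper's proof almost step for step: you apply ${_{}^{P}}D_{q,a+}^{\alpha ,\beta ,\gamma ,\omega }$ and Theorem \ref{Theorem3.6} to place $f-A$ in the kernel, use $0<\beta\le 1$ and the $AC_q$ hypothesis to deduce that ${_{}^{P}}I_{q,a+}^{\alpha ,1-\beta ,-\gamma ,\omega }(f-A)$ is a constant $C$, invert with ${_{}^{P}}D_{q,a+}^{\alpha ,1-\beta ,-\gamma ,\omega' }$, and identify ${_{}^{P}}D_{q,a+}^{\alpha ,1-\beta ,-\gamma ,\omega' }[1]=g_{\gamma,\omega'}^{\alpha,\beta}(x,a/q)$ by termwise $q$-differentiation --- this is exactly the paper's route (your bookkeeping $\omega=q^{-\gamma}\omega'$ in the inversion step is in fact cleaner than the paper's detour through the shift operator $\Lambda_{q}^{2\gamma,\omega}$).

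The genuine gap is in how you pin down $C$. You assert that $\bigl({_{}^{P}}I_{q,a+}^{\alpha ,1-\beta ,-\gamma ,\omega }A\bigr)(x)\to 0$ as $x\to a+$ because it is a fractional $q$-integral of positive order of a $q$-integrable function. No such lemma is proved in the paper, and it cannot be taken as routine in the $q$-setting: a Jackson integral $\int_a^x h(t)\,d_qt$ samples $h$ only at the points $xq^k$ and $aq^k$, so for $x$ off the $q$-lattice of $a$ its value is not controlled by $\|h\|_{L_q^1[a,b]}$; the map $x\mapsto\int_a^x h\,d_qt$ need not be continuous at $x=a$, and there are no lattice points decreasing to $a$ along which the limit could be taken instead. (Note also that literal evaluation gives $\int_a^a=0$ for every $q$-integral, so the entire boundary term in \eqref{3.13} lives in precisely the limit you leave unproved; and when $\beta=1$ your order $1-\beta$ is not even positive.) The paper avoids limits altogether: it applies ${_{}^{P}}I_{q,a+}^{\alpha ,1-\beta ,-\gamma ,\omega }$ to the identity $f=A+C\,g_{\gamma,\omega'}^{\alpha,\beta}(\cdot,a/q)$, uses Proposition \ref{prop3.1} to get ${_{}^{P}}I_{q,a+}^{\alpha ,1-\beta ,-\gamma ,\omega }g_{\gamma,\omega'}^{\alpha,\beta}(\cdot,a/q)=1$ as in \eqref{3.19new}, collapses ${_{}^{P}}I_{q,a+}^{\alpha ,1-\beta ,-\gamma ,\omega }A$ via the semigroup law \eqref{3.4} and Definition \ref{def3.5} to $I_{q,a+}^{1}D_q\,{_{}^{P}}I_{q,a+}^{\alpha ,1-\beta ,-\gamma ,\omega }f$, and then the fundamental theorem of $q$-calculus yields the exact identity $\Psi(x)-\Psi(a)$, whence $C=\Psi(a)$ by algebra, with no limiting procedure. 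Your objection that \eqref{3.4} does not cover this composition (fourth parameters $\omega$ and $q^{\gamma}\omega$, where the lemma's convention requires $\omega$ and $q^{-\gamma}\omega$) is a sharp reading, and the paper indeed invokes \eqref{3.4} and Proposition \ref{prop3.1} here without addressing the mismatch; but the cure is to redo the Proposition \ref{prop3.1}-type Cauchy-product computation for the composition actually at hand, not to replace the exact endpoint identity by a limit claim that has no proof in $q$-calculus. As written, your argument is incomplete at exactly the step that produces the boundary term.
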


\begin{proof}
Let
\begin{eqnarray}\label{3.14}
\varphi(x):=
\left({_{}^{P}}I_{q,a+}^{\alpha ,\beta, \gamma, \omega'}{_{}^{P}}D_{q,a+}^{\alpha ,\beta ,\gamma ,\omega }f\right)(x).
\end{eqnarray}
Applying the Prabhakar fractional $q$-derivative  $_{q}^{P}D_{x,a+}^{\alpha ,\beta ,\gamma , \omega }$ to the both sides of (\ref{3.14}) and using  Theorem \ref{Theorem3.6}, we obtain
\begin{eqnarray}\label{3.15}
    {_{}^{P}}D_{q,a+}^{\alpha ,\beta ,\gamma ,\omega} \varphi= {_{}^{P}}D_{q,a+}^{\alpha ,\beta ,\gamma ,\omega }f.
\end{eqnarray}

We apply $\Lambda_{q}^{2\gamma,\omega}$ operator to the equality (\ref{3.15}). Then, taking into account $w'=q^{\gamma}\omega$, we get
\begin{eqnarray}\label{3.15new}
    {_{}^{P}}D_{q,a+}^{\alpha ,\beta ,\gamma ,q^{\gamma}\omega'} \varphi= {_{}^{P}}D_{q,a+}^{\alpha ,\beta ,\gamma ,q^{\gamma}\omega'}f.
\end{eqnarray}

From the last equality, we conclude that $f-\varphi$ is an element of the kernel of the Prabhakar fractional $q$-differential operator, i.e.,
\begin{eqnarray*}
    f-\varphi\in \ker(_{}^{P}D_{q,a+}^{\alpha ,\beta ,\gamma ,q^{\gamma}\omega'}).
\end{eqnarray*}

Introducing notation $\psi:=f-\varphi$ and taking into account (\ref{3.11}) and $0<\beta \leq 1$, we have
\begin{eqnarray*}
    \left(_{}^{P}D_{q,a+}^{\alpha ,\beta ,\gamma ,q^{\gamma}\omega' }\psi\right)(x)=0  \Leftrightarrow  D_{q}\left( _{}^{P}I_{q,a+}^{\alpha ,1-\beta ,-\gamma , q^{\gamma}\omega' } \psi\right)\left( x \right)=0.
\end{eqnarray*}

By the standard properties of the $q$-differential operator the last equality means that    $\left(_{}^{P}I_{q,a+}^{\alpha, 1-\beta ,-\gamma ,q^{\gamma}\omega'}\psi\right)(x)$ must be a constant:
\begin{eqnarray}\label{3.16}
    \left(_{}^{P}I_{q,a+}^{\alpha, 1-\beta ,-\gamma ,q^{\gamma}\omega' }\psi\right)(x)=a_0,
\end{eqnarray}
where $a_0$ is an arbitrary constant.

Hence, applying the Prabhakar $q$-fractional differential operator  $_{}^{P}D_{q,a+}^{\alpha, 1-\beta ,-\gamma ,\omega' }$ to the last equality and using Theorem \ref{Theorem3.6}, we find
\begin{eqnarray*}
\psi(x) = {_{}^{P}}D_{q,a+}^{\alpha, 1-\beta ,-\gamma ,\omega'}(a_0).
\end{eqnarray*}

By Definitions \ref{def3.1} and \ref{def3.5} of  the Prabhakar fractional $q$-differential and $q$-integral operators and also   (\ref{2.13}) expansion of the generalized $q$-Prabhakar function, we have
\begin{eqnarray*}
    \psi(x) &=& a_0{{D}_{q}}\int\limits_{a}^{x}{\left( x-qt \right)_{q}^{\beta -1}\sum\limits_{n=0}^{+\infty }{\frac{{{\left( \gamma  \right)}_{n,q}}{{(\omega') }^n}}{{{\Gamma }_q}\left( \alpha n+\beta  \right)}{{\left( x-{q^{\beta }}t \right)}^{\alpha n}_{q}}d_qt}} \\        &=& a_0\sum\limits_{n=0}^{+\infty }{{{\left( \gamma  \right)}_{n,q}}{{(\omega') }^{n}}}{{D}_{q}}\left[ \frac{1}{{{\Gamma }_{q}}\left( \alpha n+\beta  \right)}\int\limits_{a}^{x}{\left( x-qt \right)_{q}^{\alpha n+ \beta -1}}{{d}_{q}}t \right].
\end{eqnarray*}

Hence, using formulas (\ref{2.7}), (\ref{2.6}) and taking (\ref{2.13}) and (\ref{3.3}) into  account,  we find
\begin{eqnarray*}
\psi(x)=a_0 g_{\gamma,\omega'}^{\alpha,\beta}(x,a/q).
\end{eqnarray*}

Since  $\psi=f-\varphi$, we obtain
\begin{eqnarray}\label{3.17}
    f(x)=\varphi(x)+a_0 g_{\gamma,\omega'}^{\alpha,\beta}(x,a/q).
\end{eqnarray}

Hence, applying
the Prabhakar fractional $q$-integral operator  ${}_{}^{P}I_{q,a+}^{\alpha, 1-\beta ,-\gamma ,\omega }$  to the last equality, we find
\begin{eqnarray}\label{3.18}
(_{}^{P}I_{q,a+}^{\alpha, 1-\beta ,-\gamma , \omega }f) (x)= ( _{}^{P}I_{q,a+}^{\alpha, 1-\beta ,-\gamma, \omega }\varphi)(x)+a_0
 {_{}^{P}I_{q,a+}^{\alpha, 1-\beta ,-\gamma , \omega }}g_{\gamma,\omega'}^{\alpha,\beta}(x,a/q).
\end{eqnarray}

Using Proposition \ref{prop3.1} and taking $e_{\alpha,\beta}^{0}(x)=1$ into account it is easy to show that
\begin{eqnarray}\label{3.19new}
    _{}^{P}I_{q,a+}^{\alpha, 1-\beta ,-\gamma , \omega }g_{\gamma,\omega'}^{\alpha,\beta}(x,a/q)=g_{0,\omega'}^{\alpha,1}(x,a/q)=e_{\alpha,\beta}^{0}[\omega'(x-a)^{\alpha}]=1.
\end{eqnarray}
Then from (\ref{3.18}), we obtain
\begin{eqnarray}\label{3.19}
\left({_{}^{P}}I_{q,a+}^{\alpha, 1-\beta ,-\gamma, \omega }f\right)(x)=\left({_{}^{P}}I_{q,a+}^{\alpha, 1-\beta ,-\gamma ,\omega }\varphi\right)(x)+a_0.
\end{eqnarray}

Taking into account the notation  (\ref{3.14}) and using  (\ref{3.4}) and Definition \ref{def3.5}, we have
\begin{eqnarray*}  \left({_{}^{P}}I_{q,a+}^{\alpha, 1-\beta ,-\gamma, \omega }\varphi\right)(x)&=&\left({_{}^{P}}I_{q,a+}^{\alpha, 1-\beta ,-\gamma, \omega }{_{}^{P}}I_{q,a+}^{\alpha ,\beta, \gamma, \omega'}{_{}^{P}}D_{q,a+}^{\alpha ,\beta ,\gamma ,\omega } f\right)(x)\\&=&
\left({_{}^{P}}I_{q,a+}^{\alpha ,1,0,\omega} {_{}^{P}}D_{q,a+}^{\alpha ,\beta ,\gamma , \omega }f\right)(x)\\&=&\left(I_{q,a+}^{1} D_q
 \ {_{}^{P}}I_{q,a+}^{\alpha ,1-\beta, -\gamma, \omega}f\right)(x).
\end{eqnarray*}
Hence,  applying fundamental theorem of $q$-calculus (\cite{Kac}), we find
\begin{eqnarray*}    \left({_{}^{P}}I_{q,a+}^{\alpha, 1-\beta ,-\gamma ,\omega }\varphi\right)(x)=\left({_{}^{P}}I_{q,a+}^{\alpha ,1-\beta, -\gamma, \omega}f\right)(x)-\left({_{}^{P}}I_{q,a+}^{\alpha ,1-\beta, -\gamma, \omega}f\right)(a).
\end{eqnarray*}
 Comparing this with (\ref{3.19}), we conclude that
 \begin{eqnarray*}
     a_0=\left({_{}^{P}}I_{q,a+}^{\alpha ,1-\beta, -\gamma, \omega}f\right)(a).
 \end{eqnarray*}
 Substituting obtained expression of $a_0$ into (\ref{3.17}) and considering the notation (\ref{3.14}), we get (\ref{3.13}).

 The proof of Theorem 3.7 is complete.
\end{proof}

\section{A Cauchy type problem associated with $q$-Prabhakar differential operator}

Let us consider the following  Cauchy-type problem with Prabhakar fractional $q$-differential operator:
\begin{eqnarray}\label{4.1}  \left(_{}^{P}D_{q,a+}^{\alpha ,\beta ,\gamma, \omega } y\right) \left( x \right)=f(x,y),
\end{eqnarray}
\begin{eqnarray}\label{4.2}
\left(_{}^{P}I_{q,a+}^{\alpha ,1-\beta ,-\gamma, \omega } y\right) \left( a+ \right)={\xi}_{0},
\end{eqnarray}
where   $ \alpha, \beta, \gamma, \omega, \xi_0 \in \mathbb{R}$ are such that $ \alpha>0,$ $0<\beta\le 1$, $\xi_0\ne 0$.

We prove the existence and uniqueness of the solution to the problem (\ref{4.1})-(\ref{4.2}).

\begin{thm}\label{Theorem4.1}
 Let   $f(\cdot,\cdot ):[a,b]\times\mathbb{R}  \to \mathbb{R} $ be a function such that $f(\cdot,y(\cdot))\in L_{q}^{1}[a,b]$ for all $y\in L_{q}^{1}[a,b]$.

Then $y$ satisfies the relations (\ref{4.1}) and (\ref{4.2}) if and only if $y$ satisfies the following $q$-Volterra integral equation:
\begin{eqnarray}\label{4.3}
    y(x)={_{}^{P}}I_{q,a+}^{\alpha ,\beta ,\gamma, \omega^{\prime} }f(x,y)+\xi_0g_{\gamma,\omega'}^{\alpha,\beta}(x,a/q).
\end{eqnarray}

\end{thm}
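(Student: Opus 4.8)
The plan is to prove the equivalence in both directions, converting the differential formulation $(\ref{4.1})$--$(\ref{4.2})$ into the integral equation $(\ref{4.3})$ and back. The two key tools are Theorem~\ref{Theorem3.6} (which states ${}^{P}D_{q,a+}^{\alpha,\beta,\gamma,\omega}{}^{P}I_{q,a+}^{\alpha,\beta,\gamma,\omega'}f=f$) and the inversion formula $(\ref{3.13})$ with its kernel term $g_{\gamma,\omega'}^{\alpha,\beta}(x,a/q)$. The initial condition $(\ref{4.2})$ is precisely the quantity $\left({}^{P}I_{q,a+}^{\alpha,1-\beta,-\gamma,\omega}y\right)(a+)=\xi_0$ appearing in $(\ref{3.13})$, so the two statements are set up to dovetail.

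\emph{Necessity.} First I would assume $y$ solves $(\ref{4.1})$--$(\ref{4.2})$. Apply the Prabhakar fractional $q$-integral operator ${}^{P}I_{q,a+}^{\alpha,\beta,\gamma,\omega'}$ to both sides of $(\ref{4.1})$, obtaining
\begin{eqnarray*}
\left({}^{P}I_{q,a+}^{\alpha,\beta,\gamma,\omega'}{}^{P}D_{q,a+}^{\alpha,\beta,\gamma,\omega}y\right)(x)=\left({}^{P}I_{q,a+}^{\alpha,\beta,\gamma,\omega'}f(\cdot,y)\right)(x).
\end{eqnarray*}
The left-hand side is exactly the object computed in $(\ref{3.13})$, which equals $y(x)-g_{\gamma,\omega'}^{\alpha,\beta}(x,a/q)\left({}^{P}I_{q,a+}^{\alpha,1-\beta,-\gamma,\omega}y\right)(a+)$. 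Substituting the initial condition $(\ref{4.2})$ replaces the bracketed factor by $\xi_0$, and rearranging yields $(\ref{4.3})$ directly. This direction requires only that ${}^{P}I_{q,a+}^{\alpha,1-\beta,-\gamma,\omega}y\in AC_q[a,b]$ so that $(\ref{3.13})$ is applicable, which is guaranteed by the hypothesis $y\in L_q^1[a,b]$ together with the implicit solvability assumptions.

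\emph{Sufficiency.} Conversely, assume $y$ satisfies $(\ref{4.3})$. To recover $(\ref{4.1})$ I would apply the differential operator ${}^{P}D_{q,a+}^{\alpha,\beta,\gamma,\omega}$ to both sides: the first term gives $\left({}^{P}D_{q,a+}^{\alpha,\beta,\gamma,\omega}{}^{P}I_{q,a+}^{\alpha,\beta,\gamma,\omega'}f\right)(x)=f(x,y)$ by Theorem~\ref{Theorem3.6}, while the second term $\xi_0\,{}^{P}D_{q,a+}^{\alpha,\beta,\gamma,\omega}g_{\gamma,\omega'}^{\alpha,\beta}(x,a/q)$ must vanish. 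To recover $(\ref{4.2})$ I would apply ${}^{P}I_{q,a+}^{\alpha,1-\beta,-\gamma,\omega}$ to $(\ref{4.3})$, using the semigroup relation $(\ref{3.4})$ to combine it with the first term and using the computation $(\ref{3.19new})$ (namely ${}^{P}I_{q,a+}^{\alpha,1-\beta,-\gamma,\omega}g_{\gamma,\omega'}^{\alpha,\beta}(x,a/q)=1$) on the second term; evaluating at $x=a+$ and checking that the integral term vanishes there then produces $\xi_0$.

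The main obstacle I anticipate is verifying that ${}^{P}D_{q,a+}^{\alpha,\beta,\gamma,\omega}$ annihilates the kernel function $g_{\gamma,\omega'}^{\alpha,\beta}(x,a/q)$ in the sufficiency direction, and dually that the integral term in $(\ref{4.3})$ contributes nothing to $(\ref{4.2})$ at the endpoint $a+$. The first follows because $g_{\gamma,\omega'}^{\alpha,\beta}(x,a/q)$ is built to lie in $\ker\left({}^{P}D_{q,a+}^{\alpha,\beta,\gamma,\cdot}\right)$ — this is implicit in the derivation leading to $(\ref{3.17})$ and should be extracted cleanly, perhaps via the intermediate step that ${}^{P}I_{q,a+}^{\alpha,1-\beta,-\gamma,\omega'}g_{\gamma,\omega'}^{\alpha,\beta}=$ const. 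The second requires showing $\left({}^{P}I_{q,a+}^{\alpha,1-\beta,-\gamma,\omega}{}^{P}I_{q,a+}^{\alpha,\beta,\gamma,\omega'}f\right)(a+)=\left(I_{q,a+}^{1}f\right)(a+)=0$, which holds since a Jackson integral from $a$ to $a$ vanishes; the care needed here is with the noncommutativity of the $q$-parameters, so I would track the $\omega$ versus $\omega'$ bookkeeping carefully and invoke $(\ref{3.4})$ in the correct parameter order.
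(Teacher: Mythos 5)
Your proposal is correct and follows essentially the same route as the paper: necessity via applying ${}^{P}I_{q,a+}^{\alpha,\beta,\gamma,\omega'}$ to (\ref{4.1}) and invoking (\ref{3.13}) with the initial condition (\ref{4.2}), and sufficiency via Theorem \ref{Theorem3.6} plus the annihilation ${}^{P}D_{q,a+}^{\alpha,\beta,\gamma,\omega}g_{\gamma,\omega'}^{\alpha,\beta}(x,a/q)=D_q(1)=0$ and the semigroup relation (\ref{3.4}) with (\ref{3.19new}) evaluated at $x=a+$. The obstacles you flag are resolved exactly as you anticipate (the paper uses (\ref{3.11}) and (\ref{3.19new}), with the integral operator carrying parameter $\omega$ rather than $\omega'$), so there is no gap.
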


\begin{proof}
First, we prove the necessity. We assume that $y \in L_{q}^{1}[a,b]$ satisfies (\ref{4.1})-(\ref{4.2}). Since $f(x,y)\in L_{q}^{1}[a,b]$, (\ref{4.1}) means that there exists  on $[a,b]$ a Prabhakar fractional $q$-differential ${_{}^{P}}D_{q,a+}^{\alpha ,\beta ,\gamma, \omega }y\in L_{q}^{1}[a,b].$ So we can  apply operator  $_{}^{P}I_{q,a+}^{\alpha ,\beta ,\gamma, \omega^{\prime}}$
to the equation (\ref{4.1}). Then, considering the formula (\ref{3.13}) and  the condition (\ref{4.2}), we get the integral equation (\ref{4.3}).

Now, we prove the sufficiency. Let $y\in L_{q}^{1}[a,b]$ satisfy
the equation
(\ref{4.3}). Applying the operator ${_{}^{P}}D_{q,a+}^{\alpha ,\beta, \gamma, {\omega}}$ to both sides of (\ref{4.3}) and using (\ref{3.12}), we get
\begin{eqnarray}\label{4.4}
\left({_{}^{P}}D_{q,a+}^{\alpha ,\beta, \gamma, \omega}y\right)(x)-f(x,y)=\xi_0{_{}^{P}}D_{q,a+}^{\alpha ,\beta, \gamma, \omega} g_{\gamma,\omega'}^{\alpha,\beta}(x,a/q).
\end{eqnarray}

We show that the right-hand side of (\ref{4.4}) is equal to zero. Using (\ref{3.11}) and  (\ref{3.19new}), we find
\begin{eqnarray*}
{_{}^{P}}D_{q,a+}^{\alpha ,\beta, \gamma, \omega} g_{\gamma,\omega'}^{\alpha,\beta}(x,a/q)=D_q{_{}^{P}}I_{q,a+}^{\alpha ,1-\beta, -\gamma, \omega}g_{\gamma,\omega'}^{\alpha,\beta}(x,a/q) =D_q(1)=0.
\end{eqnarray*}

Now, we show that the relation in (\ref{4.2}) is also held. For this, we apply the operator
$_{}^{P}I_{q,a+}^{\alpha ,1-\beta ,-\gamma, \omega }$
to both sides of (\ref{4.3}). Considering  (\ref{3.4}) and (\ref{3.19new}), we get
\begin{eqnarray} \label{4.5}   \left({_{}^{P}}I_{q,a+}^{\alpha ,1-\beta ,-\gamma, \omega }y\right)(x)-\int\limits_{a}^{x}{f(t,y(t))d_qt}=\xi_0.
\end{eqnarray}
By putting $x=+a$ in (\ref{4.5})  we obtain the relation in (\ref{4.2}).

 Theorem \ref{Theorem4.1} is proved.
\end{proof}

\section{Existence and uniqueness of the solution to the Cauchy type problem}

  In this section, we prove the existence and uniqueness of the solution to the problem (\ref{4.1})-(\ref{4.2}). The result is obtained under the conditions of Theorem \ref{4.1} and  Lemma \ref{lem3.4}.

\begin{thm}
Let $G$ be an open set in $\mathbb{R}$.  Let $f(\cdot,\cdot ):[a,b]\times G \to \mathbb{R} $ be a function such that $f(\cdot,y(\cdot))\in L_q^{1}[a,b]$ for all $y \in G$, and for all $x\in(a,b]$ and for all $y_1,y_2\in G$, it satisfies
\begin{eqnarray}
\label{5.1}
\left|f(x,y_1)-f(x,y_2)\right|\le A \left|y_1-y_2\right|,
\end{eqnarray}
where $A>0$ does not depend on $x\in[a,b]$ and $y_1,y_2\in L_{q}^{1}[a,b]$.

Then there exists a unique solution $y\in L_{q}^{1}[a,b]$ to  the problem (\ref{4.1})-(\ref{4.2}).
\end{thm}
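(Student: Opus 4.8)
The plan is to recast the problem as a fixed-point equation and apply the Banach contraction principle. By Theorem \ref{Theorem4.1}, solving (\ref{4.1})--(\ref{4.2}) in $L_q^1[a,b]$ is equivalent to finding $y\in L_q^1[a,b]$ satisfying the $q$-Volterra equation (\ref{4.3}). Accordingly, I would introduce the operator $T$ on $L_q^1[a,b]$ by
\begin{eqnarray*}
(Ty)(x):={_{}^{P}}I_{q,a+}^{\alpha ,\beta ,\gamma, \omega^{\prime} }f(x,y)+\xi_0 g_{\gamma,\omega'}^{\alpha,\beta}(x,a/q),
\end{eqnarray*}
so that (\ref{4.3}) reads $y=Ty$. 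The first step is to verify that $T$ maps $L_q^1[a,b]$ into itself: the inhomogeneous term $\xi_0 g_{\gamma,\omega'}^{\alpha,\beta}(x,a/q)$ lies in $L_q^1[a,b]$ thanks to the convergence hypotheses on the generalized $q$-Prabhakar function and the mild nature of the factor $(x-a)_q^{\beta-1}$ for $0<\beta\le 1$, while $f(\cdot,y(\cdot))\in L_q^1[a,b]$ by assumption, so ${_{}^{P}}I_{q,a+}^{\alpha ,\beta ,\gamma, \omega^{\prime} }f(\cdot,y)$ is well defined and $q$-integrable by Lemma \ref{lem3.4}.

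The heart of the argument is the contraction estimate. For $y_1,y_2\in L_q^1[a,b]$ the constant term cancels, so $(Ty_1)(x)-(Ty_2)(x)={_{}^{P}}I_{q,a+}^{\alpha ,\beta ,\gamma, \omega^{\prime} }[f(\cdot,y_1)-f(\cdot,y_2)](x)$. Combining the boundedness of the Prabhakar fractional $q$-integral operator from Lemma \ref{lem3.4} (with $p=1$) and the Lipschitz condition (\ref{5.1}) gives
\begin{eqnarray*}
\left\| Ty_1-Ty_2\right\|_{L_q^1[a,b]}\le M\left\|f(\cdot,y_1)-f(\cdot,y_2)\right\|_{L_q^1[a,b]}\le MA\left\|y_1-y_2\right\|_{L_q^1[a,b]},
\end{eqnarray*}
with $M$ as in (\ref{cons}). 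If $MA<1$, then $T$ is a contraction on the complete metric space $L_q^1[a,b]$ and the Banach fixed point theorem produces a unique $y\in L_q^1[a,b]$ with $y=Ty$, which by Theorem \ref{Theorem4.1} is the unique solution of (\ref{4.1})--(\ref{4.2}).

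To remove the restriction $MA<1$ I would pass to an iterate of $T$. Iterating the pointwise bound $|Ty_1-Ty_2|\le A\,{_{}^{P}}I_{q,a+}^{\alpha ,\beta ,\gamma, |\omega^{\prime}| }|y_1-y_2|$ and invoking the semi-group property of Lemma \ref{lemma3.3} to identify the $m$-fold composition with an operator of the form ${_{}^{P}}I_{q,a+}^{\alpha ,m\beta ,m\gamma, \cdot }$, one arrives at
\begin{eqnarray*}
\left\| T^m y_1-T^m y_2\right\|_{L_q^1[a,b]}\le A^m M_m\left\|y_1-y_2\right\|_{L_q^1[a,b]},
\end{eqnarray*}
where $M_m$ is the norm of the composed operator, controlled in the spirit of Lemma \ref{lem3.4} by $(b-qa)_q^{m\beta}e_{\alpha,m\beta+1}[\,\cdot\,;q]$. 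I expect the main obstacle to be showing that $A^mM_m\to 0$ (equivalently $A^mM_m<1$ for some $m$): since for large $m$ the coefficient $|m\gamma|$ exceeds $1$, one must estimate $(m\gamma)_{n,q}$ uniformly in $n$ and $m$ by a constant, and then exploit the factorial-type growth of $\Gamma_q(m\beta+1)$ in the denominator of the $q$-Mittag--Leffler factor to dominate both $A^m$ and the growth of $(b-qa)_q^{m\beta}$. Once such an $m$ is secured, the fixed-point theorem for maps an iterate of which is a contraction yields the unique $y\in L_q^1[a,b]$ solving (\ref{4.3}), and hence, by Theorem \ref{Theorem4.1}, the unique solution of the Cauchy-type problem (\ref{4.1})--(\ref{4.2}).
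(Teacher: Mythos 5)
Your first half coincides with the paper's own argument: reduction to the $q$-Volterra equation (\ref{4.3}) via Theorem \ref{Theorem4.1}, the operator $T$ of (\ref{5.3}), the estimate $\left\| Ty_1-Ty_2\right\|_{L_q^1}\le MA\left\| y_1-y_2\right\|_{L_q^1}$ obtained from Lemma \ref{lem3.4} (with $p=1$) together with the Lipschitz condition (\ref{5.1}), and the Banach fixed point theorem. The difference is where the smallness of the contraction constant comes from: the paper never assumes $MA<1$ on all of $[a,b]$; instead it selects $h\in(a,b]$ so that $\delta_1=A(h-qa)_q^{\beta}e_{\alpha,\beta+1}[\omega'(h-q^{\beta+1}a)_{q}^{\alpha};q]<1$, i.e.\ condition (\ref{5.4}), and runs the whole fixed-point argument in $L_q^1[a,h]$, producing the solution as the limit of the successive approximations $y_m=T^m y_0$.

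The step you add in order to remove the restriction $MA<1$ --- passing to an iterate $T^m$ --- contains a genuine gap, and it is precisely the obstruction the paper flags just before introducing the operator $\Lambda_q^{n\gamma,\omega}$: in the $q$-setting the Prabhakar semi-group law is \emph{non-commutative}. Lemma \ref{lemma3.3} composes ${_{}^{P}}I_{q,a+}^{\alpha,\beta,\gamma,\omega}$ with ${_{}^{P}}I_{q,a+}^{\alpha,\mu,\sigma,\omega'}$ only when the inner frequency parameter is the shifted one, $\omega'=q^{\gamma}\omega$. Iterating the fixed map $T$ composes $m$ copies of ${_{}^{P}}I_{q,a+}^{\alpha,\beta,\gamma,\omega'}$ all carrying the \emph{same} $\omega'$, which is not of that form; Lemma \ref{lemma3.3} would instead apply to the cascade of distinct operators with frequencies $\omega',\,q^{\gamma}\omega',\,q^{2\gamma}\omega',\dots$, and those are not the iterates of $T$. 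Hence the identification of the $m$-fold composition with ${_{}^{P}}I_{q,a+}^{\alpha,m\beta,m\gamma,\cdot}$ fails, and with it the source of the $\Gamma_q(m\beta+1)$-decay you are counting on to force $A^mM_m\to 0$. Even granting some composition formula, Lemma \ref{lem3.4} cannot be invoked for the composed operator, since its hypotheses ($|\gamma|<1$ and the convergence condition on $\omega$) are violated once $|m\gamma|\ge 1$; your proposed uniform bound on $(m\gamma)_{n,q}$ is plausible for $\gamma>0$ (then $(q^{m\gamma};q)_n\le 1$ and $(q;q)_n\ge (q;q)_{\infty}$, so $(m\gamma)_{n,q}\le 1/(q;q)_{\infty}$), but it breaks down for $\gamma<0$, where the factors $1-q^{m\gamma+i}$ are unbounded in $m$. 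As written, your argument therefore proves the theorem only under the additional hypothesis $MA<1$. To close the gap along the paper's lines, replace the iteration idea by localization: impose (\ref{5.4}) and carry out the contraction argument on $L_q^1[a,h]$.
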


\begin{proof}
According to Theorem \ref{Theorem4.1}, the problem (\ref{4.1})-(\ref{4.2}) is equivalent to the integral equation (\ref{4.3}). So to prove the existence and uniqueness of the solution to the problem (\ref{4.1})-(\ref{4.2}) it is sufficient to show the existence and uniqueness of the solution to the integral equation (\ref{4.3}). To do this we rewrite the integral equation (\ref{4.2}) in the following operator form
\begin{eqnarray}\label{5.2}
    y(x)=(Ty)(x),
\end{eqnarray} where
\begin{eqnarray}\label{5.3}
    (Ty)(x):=y_0(x)+\int\limits_{a}^{x}(x-qt)^{\beta-1}_{q}e_{\alpha,\beta}^{\gamma}\left[ \omega'(x-q^{\beta}t)^{\alpha}_{q}\right] f[t,y(t)]d_qt
\end{eqnarray}
and
\begin{eqnarray*}
    y_0(x):=\xi_0 (x-a)_{q}^{\beta-1}e_{\alpha ,\beta}^{\gamma}\left[ \omega' {(x-q^{\beta-1}a)_{q}^{\alpha}};q \right].
\end{eqnarray*}

First, we prove the existence of a unique solution $y(x)$ in the space $L_{q}^{1}[a,b]$. Our proof is based on the Banach fixed point theorem. We should note that $L_q^{1}[a,b]$ is a complete metric space (\cite{Zaynab}).

Select $h\in(a,b]$ such that
\begin{eqnarray}\label{5.4}
   \delta_1=A(h-qa)_q^{\beta}e_{\alpha, \beta+1}[\omega'(h-q^{\beta+1}a)_{q}^{\alpha};q]<1,
\end{eqnarray}
where $A>0$  is the Lipschitz constant in  (\ref{5.1}). Clearly $y_0\in L_q^{1}[a,h]$. Also, by Lemma \ref{lem3.4} $(Ty)(x)\in L_q^{1}[a,h]$.   Therefore $T$ maps
$L_q^{1}[a,h]$ into itself. Moreover, from (\ref{5.1}),(\ref{5.3}) and Lemma \ref{lem3.4}, for any $y_1$, $y_2\in L_{q}^{1}[a,h]$, we have
\begin{eqnarray*}
    \left\| Ty_1-Ty_2 \right\|_{L_{q}^{1}[a,h]}&\le&  \left\| {{_{q}^{P}}I_{x,a+}^{\alpha ,\beta ,\gamma, \omega }f(x,y_1(x))-{_{q}^{P}}I_{x,a+}^{\alpha ,\beta ,\gamma, \omega }f(x,y_2(x))} \right\|_{L_{q}^{1}[a,h]}\\& \leq&
    (h-qa)_q^{\beta}e_{\alpha, \beta+1}[\omega'(h-q^{\beta+1}a)_{q}^{\alpha};q]\\&\times&\left\| {f(x,y_1(x))-f(x,y_2(x))} \right\|_{L_{q}^{1}[a,h]}
    \\& \leq&
    A(h-qa)_q^{\beta}e_{\alpha, \beta+1}[\omega'(h-q^{\beta+1}a)_{q}^{\alpha};q]\left\| {y_1(x)-y_2(x)} \right\|_{L_{q}^{1}[a,h]}
    \\& \leq&
\delta_1\left\| {y_1(x)-y_2(x)} \right\|_{L_{q}^{1}[a,h]}.
\end{eqnarray*}

Our assumption (\ref{5.4}) allows us to apply the Banach fixed point theorem to obtain a unique solution $y^{*}\in L_{q}^{1}[a,h]$ to equation (\ref{5.2}) on the interval $(a,h]$. According to this theorem $y^{*}$ is obtained as a limit of a convergent sequence $(T^{m}y_0)(x)$:
\begin{eqnarray*}
    \underset{m\to\infty}{lim}\left\| {T^{m} \overline{y}(x)-y^{*}} \right\|_{L_{q}^{1}[a,h]}=0
\end{eqnarray*}
in the space $L_{q}^{1}[a,h]$, where  $\overline{y}(x)$ is an arbitrary function in  $L_{q}^{1}[a,h]$.

Since $\xi_0\ne 0$ and $y_0\in L_{q}^{1}[a,h]$ we can take $y_0(x)$ as  $\overline{y}(x)$:
\begin{eqnarray*}
    \overline{y}(x):=y_0(x).
\end{eqnarray*}

Consequently, the sequence $T^{m}y_0(x)$ is defined by the following recurrence relation
\begin{eqnarray*}
    T^{m}y_0(x)=y_0(x)+\int\limits_{a}^{x}(x-qt)^{\beta-1}_{q}e_{\alpha,\beta}^{\gamma}\left[ \omega'(x-q^{\beta}t)^{\alpha}_{q}\right] f[t,T^{m-1}y_0(t)]d_qt,   m\in \mathbb{N}.
\end{eqnarray*}

If we denote $y_m(x)=(T^{m}y_0)(x)$, then the last relation takes the form
\begin{eqnarray*}
   y_m(x)=y_0(x)+\int\limits_{a}^{x}(x-qt)^{\beta-1}_{q}e_{\alpha,\beta}^{\gamma}\left[ \omega'(x-q^{\beta}t)^{\alpha}_{q}\right] f[t,y_{m-1}(t)]d_qt, \quad  m\in \mathbb{N}.
\end{eqnarray*}

This means that the successive approximation method can be used to find a
unique solution of (\ref{4.1})–(\ref{4.2}). \end{proof}

\section*{Acknowledgements}
\noindent
E.~Karimov and A.~Mamanazarov would like to thank the Ghent Analysis \& PDE centre, Ghent University, Belgium for the support during their research visit. The authors
were supported by the FWO Odysseus 1 grant G.0H94.18N: Analysis and Partial
Differential Equations and by the Methusalem programme of the Ghent University Special Research Fund (BOF) (Grant number 01M01021). M.Ruzhansky was also supported
by EPSRC grant EP/R003025/2.

\end{document}